\newcounter{global}
\theoremstyle{definition}
\newtheorem{definition}[global]{Definition}
\theoremstyle{plain}
\newtheorem{theorem}[global]{Theorem}
\newtheorem{lemma}[global]{Lemma}
\newtheorem{corollary}[global]{Corollary}
\newtheoremstyle{note}{}{}{}{}{\itshape}{.}{.5em}{}
\theoremstyle{note}
\newtheorem{remark}{Remark}%
\newtheorem{example}{Example}%
\renewcommand\section{%
  \@startsection {section}{1}{\z@}%
  {-3.5ex \@plus -1ex \@minus -.2ex}%
  {2.3ex \@plus.2ex}%
  {\normalfont\large\bfseries}}
\DeclareFontFamily{U}{eurmo}{}
\DeclareFontShape{U}{eurmo}{m}{n}{<-6>eurmo10<6-8>eurmo10<8->eurmo10}{}
\DeclareMathAlphabet{\eurmo}{U}{eurmo}{m}{n}
\def\itm#1{{\rm(\textit{\romannumeral#1})}}
\def\e#1{\eurmo{#1}}
\def\eq{\approx}
\def\ineq{\ensuremath{\preccurlyeq}}
\def\iineq{\succcurlyeq}
\def\tu#1{\left<#1\right>}
\def\br#1{\left[#1\right]}
\newcommand{\st}[1][M]{{\ensuremath{\mathbf{#1}}}}
\newcommand{\alg}[1][M]{\ensuremath{\tu{#1, F^{\st[#1]}}}}
\newcommand{\lalg}[1][M]{\ensuremath{\tu{#1, \eq^{\st[#1]}, F^{\st[#1]}}}}
\newcommand{\llalg}[1][M]{\ensuremath{\tu{#1, \eq^{\st[#1]}, \ineq^{\st[#1]}, F^{\st[#1]}}}}
\newcommand{\loalg}[1][M]{\ensuremath{\tu{#1, \ineq^{\st[#1]}, F^{\st[#1]}}}}
\renewcommand{\L}{{\st[L]}}
\newcommand{\K}{{\cal K}}
\newcommand{\TX}[1][X]{{{\st[T](#1)}}}
\let\dprod=\prod
\newcommand{\dst}[1][M]{{\ensuremath{\dprod_{i \in I}\!\st[#1]_i}}}
\newcommand{\icng}[1][\theta]{\ensuremath{#1}}
\newcommand{\qc}[1][\xi]{\ensuremath{#1}}
\newcommand{\cl}[2][\xi]{{\ensuremath{\br{#2}_{#1}}}}
\newcommand{\fcon}[2][X]{{\xi_{#2}(#1)}}
\newcommand{\val}[2][\st,v]{%
  \ensuremath{\left\lVert%
      \bgroup #2\egroup%
    \right\rVert_{#1}}}
\newcommand{\home}[1]{\ensuremath{{#1^{\scriptscriptstyle\sharp}}}}
\newcommand{\rflc}[1]{\ensuremath{g_{#1}}}
\newcommand{\Mod}[1][\Sigma]{{\mathop{\mathrm{Mod\!}}{(#1)}}}
\newcommand{\IneqX}[2][\K]{\mathop{\mathrm{Ineq\kern-.2pt}_{#2}\kern-2.4pt}{(#1)}}
\newcommand{\falg}[2][X]{{{\st[F]}\kern-.3ex_{#2}(\overline{#1})}}
\newcommand{\oper}[1][O]{{\mathop{\mathrm{#1\!\,}}}}
\def\V{{\oper[V]}}
\newcommand{\fcset}[2][X]{{\Phi_{{\cal #2}}(#1)}}
\def\se#1{\left\{#1\right\}}
\def\smallcap{\mathop{{\textstyle\bigcap}}\nolimits}%
\let\oldphi=\phi
\let\phi=\varphi
\let\varphi=\oldphi
\newcommand{\gen}[2][M]{{\br{#2}_{\st[#1]}}}
\newcommand{\stab}[2][N]{\ensuremath{\e{#2}^{\supseteq #1}}}
\def\ske#1{\mathop{\mathrm{ske\!}}{(#1)}}
\newcommand{\thr}[2][M]{\ensuremath{\st[#1]^{#2}}}
\begin{document}

\title{Variety theorem for algebras with fuzzy order}

\date{\normalsize%
  Dept. Computer Science, Palacky University, Olomouc}

\author{Vilem Vychodil\footnote{%
    e-mail: \texttt{vychodil@binghamton.edu},
    phone: +420 585 634 705,
    fax: +420 585 411 643}}

\maketitle

\begin{abstract}
  We present generalization of the Bloom variety theorem of ordered algebras
  in fuzzy setting. We introduce algebras with fuzzy orders which consist
  of sets of functions which are compatible with particular binary fuzzy
  relations called fuzzy orders. Fuzzy orders are defined on universe sets
  of algebras using complete residuated lattices as structures of degrees.
  In this setting,
  we show that classes of models of fuzzy sets of inequalities are closed
  under suitably defined formations of subalgebras, homomorphic images,
  and direct products. Conversely, we prove that classes having these closure
  properties are definable by fuzzy sets of inequalities.
\end{abstract}

\section{Introduction}\label{sec:intro}
In this paper, we develop previous results on closure properties of model
classes of fuzzy structures~\cite{Be:Bvtafl,BeVy:FHLII} which focused on
algebras equipped with fuzzy equalities. Recall from~\cite{BeVy:Awfs} that
algebras with fuzzy equalities are considered as general algebras, i.e.,
structures consisting of universe sets equipped with (ordinary) $n$-ary
functions, which are in addition compatible with given fuzzy equality relations.
Thus, an algebra with fuzzy equality may be viewed as a traditional algebra
with an additional relational component. The aim of algebras with fuzzy
equalities is to formalize functional systems which preserve similarity
in that the functions, when used with pairwise similar arguments,
produce similar results.
The existing results on closure properties of
algebras with fuzzy equalities include generalization~\cite{Be:Bvtafl} of
the Birkhoff variety theorem~\cite{Bi:Osaa} and results on classes definable
by graded implications between identities~\cite{BeVy:FHLII},
cf. also~\cite{BeVy:FEL} for a survey of results.

Our aim in this paper is to study algebras equipped with fuzzy orders which
in general are intended to formalize different relationships than similarity
fuzzy relations and may be regarded as representations of (graded) preferences.
Analogously as in the case of algebras with fuzzy equalities, we assume that
functions of algebras are compatible with given fuzzy orders. In this case,
the compatibility says that the degree to which the result of a function
applied to $\e{b}_1,\dots,\e{b}_n$ is preferred to the result of the
function applied to $\e{a}_1,\dots,\e{a}_n$ is at least as high as the
degree to which $\e{b}_1$ is preferred to $\e{a}_1$ \emph{and} $\cdots$
\emph{and} $\e{b}_n$ is preferred to $\e{a}_n$ with the logical connective
``\emph{and}'' interpreted by a suitable truth function, e.g.,
a left-continuous triangular norm~\cite{KMP:TN}.

One of the important aspects of our approach is that we consider theories
as fuzzy sets of atomic formulas (inequalities)
prescribing degrees to which the formulas
shall be satisfied in models. We thus utilize one of the basic concepts
of Pavelka's abstract fuzzy logic \cite{Pav:Ofl1,Pav:Ofl2,Pav:Ofl3} in which
theories are fuzzy sets of (abstract formulas) and the semantic entailment
is defined to degrees, cf. also \cite{Ger:FL} for a general treatment of
related topics. In this setting, we prove that model classes of fuzzy sets
of inequalities are closed under formations of homomorphic images,
subalgebras, and direct products and, conversely, classes obeying these
closure properties are model classes of fuzzy sets of inequalities,
establishing an analogy of the Bloom variety theorem~\cite{Bloom}
for ordered algebras.

This paper is organized as follows. In Section~\ref{sec:prelim},
we present preliminaries. In Section~\ref{sec:algs}, we present
the algebras with fuzzy order and their relationship to algebras with
fuzzy equalities. Section~\ref{sec:ex} presents examples of algebras
with fuzzy order. Section~\ref{sec:ac} develops basic algebraic
constructions which are further exploited in Section~\ref{sec:vars}
devoted to the relationship of varieties and inequational classes
of algebras with fuzzy order.

\section{Preliminaries}\label{sec:prelim}
A complete (integral commutative) residuated lattice \cite{Bel:FRS,GaJiKoOn:RL}
is an algebra $\L=\langle L,\wedge,\vee,\otimes,\rightarrow,0,1\rangle$
where
$\langle L,\wedge,\vee,0,1 \rangle$ is a complete lattice,
$\langle L,\otimes,1 \rangle$ is a commutative monoid, and
$\otimes$ and $\rightarrow$ satisfy the adjointness property:
$a \otimes b \leq c$ if{}f $a \leq b \rightarrow c$
($a,b,c \in L$). One of the important properties implied by the
adjointness property is the isotony of $\otimes$. That is,
$a \leq b$ and $c \leq d$ yeild $a \otimes c \leq b \otimes d$.
Examples of complete residuated
lattices include structures on the real unit interval given
by left-continuous t-norms~\cite{KMP:TN} as well as finite structures
and the structures play important role in fuzzy logics in
the narrow sense~\cite{EsGo:MTL,Got:Mfl,Haj:MFL}. A survey of recent
results in fuzzy logics can be found in~\cite{CiHaNo1,CiHaNo2}.

Given $\L$ and $M \ne \emptyset$, a binary $\L$-relation $R$
on $M$ is a map $R\!: M \times M \to L$. For $\e{a},\e{b} \in M$,
the degree $R(\e{a},\e{b}) \in L$ is interpreted as the degree to
which $\e{a}$ and $\e{b}$ are $R$-related. If $\approx,\ineq,\ldots$
denote binary $\L$-relations, we use the usual infix notation and
write $\e{a} \approx \e{b}$ instead of ${\approx}(\e{a},\e{b})$ and the like.
For binary $\L$-relations $R_1$ and $R_2$ on $M$, we put $R_1 \subseteq R_2$
whenever $R_1(\e{a},\e{b}) \leq R_2(\e{a},\e{b})$ for all $\e{a},\e{b} \in M$
and say that $R_1$ is (fully) contained in $R_2$.
As usual, a binary $\L$-relation $R^{-1}$ satisfying
$R^{-1}(\e{a},\e{b}) = R(\e{b},\e{a})$ (for all $\e{a},\e{b} \in M$)
is called the inverse of $R$. For convenience, inverses of relations
denoted by symbols like $\ineq$,\,\ldots\ are denoted by $\iineq$,\,\ldots\
Operations with $\L$-relations
can be defined componentwise~\cite[p. 80]{Bel:FRS} using operations in $\L$.
For instance,
for an $I$-indexed family $\{R_i;\, i \in I\}$ of binary
$\mathbf{L}$-relations on $M$, we may consider the intersection
$\bigcap_{i \in I} R_i$ of all $R_i$ ($i \in I$) which is
a binary $\L$-relation with $\bigl(\bigcap_{i \in I} R_i\bigr)(\e{a},\e{b}) =
\bigwedge_{i \in I}R_i(\e{a},\e{b})$ for all $\e{a},\e{b} \in M$.

In the paper, we consider algebras of a given type. A type is given by
a set $F$ of function symbols together with their arities. We assume that
arity of each $f \in F$ is finite. Recall that an algebra (of type $F$)
is a structure $\st = \alg$ where $M$ is a non-empty universe
set and $F^{\st}$ is a set of functions interpreting the function symbols
in $F$. Namely,
\begin{align*}
  F^{\st} = \bigl\{f^{\st}\!: M^n \to M;\,
  f \text{ is $n$-ary function symbol in } F\bigr\}.
\end{align*}
An algebra with $\L$-equality~\cite[Definition 3.1]{BeVy:Awfs} (of type $F$)
is a structure
$\st = \lalg$ such that $\alg$ is an algebra (of type $F$) and
$\eq^{\st}$ is a binary $\L$-relation on $M$ satisfying the
following conditions:
\begin{align}
  \e{a} \eq^{\st} \e{b} = 1
  &\text{ if{}f } \e{a} = \e{b}, \label{eqn:eRef} \\
  \e{a} \eq^{\st} \e{b}
  &= \e{b} \eq^{\st} \e{a}, \label{eqn:eSym} \\
  \e{a} \eq^{\st} \e{b} \otimes \e{b} \eq^{\st} \e{c}
  &\leq \e{a} \eq^{\st} \e{c}, \label{eqn:eTra} \\
  \e{a}_1 \eq^{\st} \e{b}_1 \otimes \cdots \otimes \e{a}_n \eq^{\st} \e{b}_n
  &\leq f^{\st}(\e{a}_1,\ldots,\e{a}_n) \eq^{\st} f^{\st}(\e{b}_1,\ldots,\e{b}_n)
  \label{eqn:eCom}
\end{align}
for all $\e{a},\e{b},\e{c},\e{a}_1,\e{b}_1,\ldots,\e{a}_n,\e{b}_n \in M$
and any $n$-ary $f \in F$. Recall that condition \eqref{eqn:eRef} represents
reflexivity and separability, \eqref{eqn:eSym} is symmetry, \eqref{eqn:eTra}
is transitivity with respect to $\otimes$ in $\L$
(so-called $\otimes$-transitivity), and \eqref{eqn:eCom} states that $f^{\st}$
is compatible with $\eq^{\st}$ in sense that pairwise similar elements
are mapped by $f^{\st}$ to pairwise similar results.
Algebraic constructions of algebras with fuzzy equalities are developed
in~\cite{BeVy:Awfs,Vy:DLaRPoAwFE},
see also~\cite{BeVy:FEL} for an extensive treatment.

\section{Algebras with Fuzzy Order}\label{sec:algs}
In this section, we introduce algebras with fuzzy order. The motivation is
similar as in the case of algebras with fuzzy equalities. The intention is to
introduce algebras whose functions are compatible with given fuzzy order.
We start by considering algebras with fuzzy equalities as the starting
structures and equip them with additional binary $\L$-relations. Later,
we observe that the additional $\L$-relations in fact determine the
underlying fuzzy equalities.

\begin{definition}\label{def:llalg}
  Let $\st = \lalg$ be an algebra with $\L$-equality. Let $\ineq^{\st}$
  be a binary $\L$-relation on $M$ such that
  \begin{align}
    \e{a} \ineq^{\st} \e{a}
    &= 1, \label{eqn:iRef} \\
    \e{a} \ineq^{\st} \e{b} \wedge \e{b} \ineq^{\st} \e{a}
    &\leq \e{a} \eq^{\st} \e{b}, \label{eqn:iAsy} \\
    \e{a} \ineq^{\st} \e{b} \otimes \e{b} \ineq^{\st} \e{c}
    &\leq \e{a} \ineq^{\st} \e{c}, \label{eqn:iTra} \\
    \e{a}_1 \eq^{\st} \e{b}_1 \otimes \e{a}_2 \eq^{\st} \e{b}_2
    &\leq \e{a}_1 \ineq^{\st} \e{a}_2
    \rightarrow \e{b}_1 \ineq^{\st} \e{b}_2, \label{eqn:iComE} \\
    \e{a}_1 \ineq^{\st} \e{b}_1 \otimes \cdots \otimes
    \e{a}_n \ineq^{\st} \e{b}_n
    &\leq f^{\st}(\e{a}_1,\ldots,\e{a}_n) \ineq^{\st}
    f^{\st}(\e{b}_1,\ldots,\e{b}_n),
    \label{eqn:iComF}
  \end{align}
  for all $\e{a},\e{b},\e{c},\e{a}_1,\e{b}_1,\ldots,\e{a}_n,\e{b}_n \in M$
  and any $n$-ary $f \in F$. The structure $\llalg$ is called
  an algebra with $\L$-equality and $\L$-order.
\end{definition}

As in the case of algebras with fuzzy equalities, \eqref{eqn:iRef} states that
$\ineq^{\st}$ is reflexive, \eqref{eqn:iAsy} represents antisymetry
of $\ineq^{\st}$ with respect to $\eq^{\st}$, \eqref{eqn:iTra}
is $\otimes$-transitivity, and \eqref{eqn:iComF} is compatibility of $f^{\st}$
with $\ineq^{\st}$. Put in words, \eqref{eqn:iComF} states that the degree
to which $f^{\st}(\e{b}_1,\ldots,\e{b}_n)$ is greater than (or equal to)
$f^{\st}(\e{a}_1,\ldots,\e{a}_n)$ is at least the degree to which
$\e{b}_1$ is greater (or equal to) than $\e{a}_1$ and $\cdots$ and 
$\e{b}_n$ is greater (or equal to) than $\e{a}_n$ with ``and'' interpreted
by $\otimes$.
Using the adjointness, \eqref{eqn:iComE} can be equivalently restated as
\begin{align}
  \e{a}_1 \eq^{\st} \e{b}_1 \otimes \e{a}_2 \eq^{\st} \e{b}_2
  \otimes \e{a}_1 \ineq^{\st} \e{a}_2
  &\leq \e{b}_1 \ineq^{\st} \e{b}_2 \label{eqn:iComE_adj}
\end{align}
and the condition expresses that $\ineq^{\st}$ is compatible with $\eq^{\st}$.
It can be shown that the conditions \eqref{eqn:iAsy} and \eqref{eqn:iComE}
we have used together represent a constraint on $\eq^{\st}$ with respect
to $\ineq^{\st}$. Namely, $\eq^{\st}$ in $\st$ is uniquely given
by $\ineq^{\st}$ as it is shown by the following assertion.

\begin{theorem}\label{th:prop_ord}
  Let $\st = \lalg$ be an algebra with $\L$-equality and let $\ineq^{\st}$
  be a binary $\L$-relation on $M$ satisfying
  \eqref{eqn:iRef}, \eqref{eqn:iAsy}, \eqref{eqn:iTra}, and \eqref{eqn:iComF}.
  Then the following conditions are equivalent:
  \begin{enumerate}
  \item[\itm{1}]
    $\llalg$ is an algebra with $\L$-equality and $\L$-order;
  \item[\itm{2}]
    $\eq^{\st}$ and $\ineq^{\st}$ satisfy \eqref{eqn:iComE};
  \item[\itm{3}]
    $\eq^{\st} \mathop{\subseteq} \ineq^{\st}$;
  \item[\itm{4}]
    $\eq^{\st} \mathop{=} \ineq^{\st} \cap \iineq^{\st}$;
  \item[\itm{5}]
    $\eq^{\st}$ is the symmetric interior of $\ineq^{\st}$.
  \end{enumerate}
\end{theorem}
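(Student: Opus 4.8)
The plan is to regard \itm{1} and \itm{2} as interchangeable and then run through \itm{2}--\itm{5} by short pairwise equivalences. Since the structure $\st$ is assumed to satisfy all of \eqref{eqn:iRef}, \eqref{eqn:iAsy}, \eqref{eqn:iTra}, and \eqref{eqn:iComF}, Definition~\ref{def:llalg} says that $\llalg$ is an algebra with $\L$-equality and $\L$-order precisely when the single remaining condition \eqref{eqn:iComE} holds. Hence \itm{1} $\Leftrightarrow$ \itm{2} is immediate, and it suffices to establish the equivalence of \itm{2}, \itm{3}, \itm{4}, and \itm{5}.

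For \itm{2} $\Rightarrow$ \itm{3} I would instantiate the adjoint form \eqref{eqn:iComE_adj} at $\e{a}_1 = \e{a}_2 = \e{b}_1 = \e{a}$ and $\e{b}_2 = \e{b}$. Two of the three factors on the left then collapse to $1$, the first by reflexivity \eqref{eqn:eRef} of $\eq^{\st}$ and the third by \eqref{eqn:iRef} of $\ineq^{\st}$, leaving $\e{a} \eq^{\st} \e{b} \leq \e{a} \ineq^{\st} \e{b}$, i.e.\ $\eq^{\st} \subseteq \ineq^{\st}$. For the converse \itm{3} $\Rightarrow$ \itm{2}, I would verify \eqref{eqn:iComE_adj} directly: from $\eq^{\st} \subseteq \ineq^{\st}$ together with symmetry \eqref{eqn:eSym} one gets $\e{a}_1 \eq^{\st} \e{b}_1 = \e{b}_1 \eq^{\st} \e{a}_1 \leq \e{b}_1 \ineq^{\st} \e{a}_1$ and $\e{a}_2 \eq^{\st} \e{b}_2 \leq \e{a}_2 \ineq^{\st} \e{b}_2$; isotony of $\otimes$ then bounds the left-hand side of \eqref{eqn:iComE_adj} by $\e{b}_1 \ineq^{\st} \e{a}_1 \otimes \e{a}_1 \ineq^{\st} \e{a}_2 \otimes \e{a}_2 \ineq^{\st} \e{b}_2$, and two applications of $\otimes$-transitivity \eqref{eqn:iTra} collapse this to $\e{b}_1 \ineq^{\st} \e{b}_2$, as required.

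For \itm{3} $\Leftrightarrow$ \itm{4}, note that the inclusion $\ineq^{\st} \cap \iineq^{\st} \subseteq \eq^{\st}$ is nothing but antisymmetry \eqref{eqn:iAsy}, so it always holds; thus \itm{4} reduces to the reverse inclusion $\eq^{\st} \subseteq \ineq^{\st} \cap \iineq^{\st}$. Using symmetry of $\eq^{\st}$ once more, this reverse inclusion is in turn equivalent to $\eq^{\st} \subseteq \ineq^{\st}$, which is \itm{3}, while \itm{4} $\Rightarrow$ \itm{3} is trivial since the intersection is contained in each component. Finally, for \itm{4} $\Leftrightarrow$ \itm{5} I would observe that $\ineq^{\st} \cap \iineq^{\st}$ is symmetric and contained in $\ineq^{\st}$, and that any symmetric $\L$-relation $R \subseteq \ineq^{\st}$ satisfies $R = R^{-1} \subseteq \iineq^{\st}$ and hence $R \subseteq \ineq^{\st} \cap \iineq^{\st}$; so $\ineq^{\st} \cap \iineq^{\st}$ is the largest symmetric $\L$-relation below $\ineq^{\st}$, i.e.\ the symmetric interior of $\ineq^{\st}$, and \itm{4} and \itm{5} express the same identity.

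I expect the only genuinely delicate step to be the bookkeeping in \itm{3} $\Rightarrow$ \itm{2}: one must insert the right padding via symmetry so that the three $\ineq^{\st}$-factors line up into a single transitivity chain $\e{b}_1 \to \e{a}_1 \to \e{a}_2 \to \e{b}_2$, and one must take care that both transitivity steps use $\otimes$ (not $\wedge$) in order to match the monoidal left-hand side of \eqref{eqn:iComE_adj}. Everything else is either definitional (\itm{1} $\Leftrightarrow$ \itm{2} and \itm{4} $\Leftrightarrow$ \itm{5}) or a direct reading of the hypotheses, with antisymmetry \eqref{eqn:iAsy} already supplying one half of \itm{4}.
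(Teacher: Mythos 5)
Your proposal is correct and uses essentially the same ingredients as the paper's proof: the same instantiation of \eqref{eqn:iComE_adj} for \itm{2}\,$\Rightarrow$\,\itm{3}, the same use of \eqref{eqn:eSym} and \eqref{eqn:iAsy} for \itm{3}\,$\Leftrightarrow$\,\itm{4}, the same largest-symmetric-subrelation argument for \itm{5}, and the same three-factor transitivity chain $\e{b}_1 \ineq^{\st} \e{a}_1 \otimes \e{a}_1 \ineq^{\st} \e{a}_2 \otimes \e{a}_2 \ineq^{\st} \e{b}_2$ as the key computation. The only difference is organizational: the paper closes a single cycle \itm{1}\,$\Rightarrow$\,\itm{2}\,$\Rightarrow$\,\itm{3}\,$\Rightarrow$\,\itm{4}\,$\Rightarrow$\,\itm{5}\,$\Rightarrow$\,\itm{1} and places that chain computation in the step \itm{5}\,$\Rightarrow$\,\itm{1}, whereas you prove pairwise equivalences and place it in \itm{3}\,$\Rightarrow$\,\itm{2}; this is immaterial.
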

\begin{proof}
  ``\itm{1}\,$\Rightarrow$\,\itm{2}'': Trivial.

  ``\itm{2}\,$\Rightarrow$\,\itm{3}'':
  As a particular case of~\eqref{eqn:iComE} for $\e{a}_1 = \e{a}_2 = \e{b}_1$,
  we have
  \begin{align*}
    \e{b}_1 \eq^{\st} \e{b}_1 \otimes
    \e{b}_1 \eq^{\st} \e{b}_2 \otimes
    \e{b}_1 \ineq^{\st} \e{b}_1 \leq
    \e{b}_1 \ineq^{\st} \e{b}_2.
  \end{align*}
  Now, using~\eqref{eqn:eRef} and~\eqref{eqn:iRef}, the previous inequality
  yields
  \begin{align*}
    \e{b}_1 \eq^{\st} \e{b}_2 =
    1 \otimes \e{b}_1 \eq^{\st} \e{b}_2 \otimes 1
    \leq \e{b}_1 \ineq^{\st} \e{b}_2,
  \end{align*}
  proving \itm{3}.

  ``\itm{3}\,$\Rightarrow$\,\itm{4}'':
  Take any $\e{a},\e{b} \in M$. Since $\eq^{\st}$ satisfies \eqref{eqn:eSym},
  using the assumption $\eq^{\st} \mathop{\subseteq} \ineq^{\st}$, we get
  \begin{align*}
    \e{a} \eq^{\st} \e{b} = \e{b} \eq^{\st} \e{a} \leq
    \e{b} \ineq^{\st} \e{a} = \e{a} \iineq^{\st} \e{b}
  \end{align*}
  which yields
  $\eq^{\st} \mathop{\subseteq} \iineq^{\st}$.
  Hence, $\eq^{\st} \mathop{\subseteq} \ineq^{\st} \cap \iineq^{\st}$.
  The converse inclusion follows directly by~\eqref{eqn:iAsy}.

  ``\itm{4}\,$\Rightarrow$\,\itm{5}'':
  Follows by properties of interior systems and interior operators.
  In a more detail, $\ineq^{\st} \cap \iineq^{\st}$ is obviously
  symmetric and it is a subset of $\ineq^{\st}$. If $R$ is symmetric and
  $R \mathop{\subseteq} \ineq^{\st}$, then the symmetry of $R$ gives 
  $R \mathop{\subseteq} \iineq^{\st}$ and thus
  $R \mathop{\subseteq} \ineq^{\st} \cap \iineq^{\st}$, proving that 
  $\ineq^{\st} \cap \iineq^{\st}$ is the symmetric interior of $\ineq^{\st}$.

  ``\itm{5}\,$\Rightarrow$\,\itm{1}'':
  It suffices to check that $\eq^{\st}$ and $\ineq^{\st}$
  satisfy \eqref{eqn:iComE}. The fact that $\eq^{\st}$ is the symmetric
  interior of $\ineq^{\st}$ yields 
  $\eq^{\st} \mathop{\subseteq} \ineq^{\st}$ and
  $\eq^{\st} \mathop{\subseteq} \iineq^{\st}$.
  Therefore, using the assumption and \eqref{eqn:iTra} twice we get
  \begin{align*}
    \e{a}_1 \eq^{\st} \e{b}_1 \otimes
    \e{a}_2 \eq^{\st} \e{b}_2
    \otimes \e{a}_1 \ineq^{\st} \e{a}_2
    &\leq
    \e{b}_1 \ineq^{\st} \e{a}_1 \otimes
    \e{a}_2 \ineq^{\st} \e{b}_2 \otimes
    \e{a}_1 \ineq^{\st} \e{a}_2 \\
    &\leq \e{b}_1 \ineq^{\st} \e{b}_2
  \end{align*}
  which proves \eqref{eqn:iComE}.
\end{proof}

\begin{remark}
  Note that \cite[Definition 4.42]{Bel:FRS} introduces the notion of
  an $\L$-ordered set as a set $M$ equipped
  with $\eq^{\st}$ and $\ineq^{\st}$ satisfying
  \eqref{eqn:iRef}, \eqref{eqn:iAsy}, \eqref{eqn:iTra},
  and \eqref{eqn:iComE_adj}. Hence, algebras with fuzzy equalities and
  fuzzy orders introduced by Definition~\ref{def:llalg} are 
  $\L$-ordered sets in sense of \cite{Bel:FRS} with an additional
  compatible functional component. Note also that \cite[Lemma 4.45]{Bel:FRS}
  shows that $\L$-ordered sets satisfy the condition \itm{4}
  in Theorem~\ref{th:prop_ord}. The notion of an $\L$-order we utilize in
  this paper is not the only one generalization of a partial order in graded
  setting. Graded order relations or \emph{fuzzy orders} are often defined
  so that the antisymmetry condition~\eqref{eqn:iAsy} is formulated using
  $\otimes$ in place of $\wedge$, see~\cite{BoDeFo:Cfwo}.
\end{remark}

In our approach to algebras with fuzzy equalities and fuzzy orders,
we have started with algebras with fuzzy equalities and expanded the
structures by an additional binary fuzzy relation. The following theorem
shows that we may as well start with $\ineq^{\st}$ satisfying certain
conditions and consider $\eq^{\st}$ a derived fuzzy relation.

\begin{theorem}\label{th:loalg}
  Let $\st = \alg$ be an algebra and $\ineq^{\st}$ be a binary $\L$-relation
  on $M$ satisfying \eqref{eqn:iRef}, \eqref{eqn:iTra}, \eqref{eqn:iComF}.
  If for any distinct elements $\e{a},\e{b} \in M$ we have
  $\e{a} \ineq^{\st} \e{b} \ne 1$ or $\e{b} \ineq^{\st} \e{a} \ne 1$,
  then $\llalg$ with $\eq^{\st} \mathop{=} \ineq^{\st} \cap \iineq^{\st}$
  is an algebra with $\L$-equality and $\L$-order.
\end{theorem}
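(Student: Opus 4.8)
The plan is to verify first that the derived relation $\eq^{\st} \mathop{=} \ineq^{\st} \cap \iineq^{\st}$ is an $\L$-equality, i.e.\ that it obeys \eqref{eqn:eRef}--\eqref{eqn:eCom}, and then to read off the order-compatibility conditions essentially for free by invoking Theorem~\ref{th:prop_ord}. Throughout I would work from the pointwise description $\e{a} \eq^{\st} \e{b} = \e{a} \ineq^{\st} \e{b} \wedge \e{b} \ineq^{\st} \e{a}$, which immediately yields the two containments $\eq^{\st} \mathop{\subseteq} \ineq^{\st}$ and $\eq^{\st} \mathop{\subseteq} \iineq^{\st}$ that drive all the estimates below.

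For \eqref{eqn:eRef}, reflexivity $\e{a} \eq^{\st} \e{a} = 1$ is immediate from \eqref{eqn:iRef}. Separability is the one place where the extra hypothesis is needed: if $\e{a} \eq^{\st} \e{b} = 1$ then both $\e{a} \ineq^{\st} \e{b} = 1$ and $\e{b} \ineq^{\st} \e{a} = 1$, which by the assumption on distinct elements forces $\e{a} = \e{b}$. Symmetry \eqref{eqn:eSym} follows from commutativity of $\wedge$. For $\otimes$-transitivity \eqref{eqn:eTra} I would bound the product $\e{a} \eq^{\st} \e{b} \otimes \e{b} \eq^{\st} \e{c}$ from above in two ways: using $\eq^{\st} \mathop{\subseteq} \ineq^{\st}$ together with isotony of $\otimes$ and \eqref{eqn:iTra} to reach $\e{a} \ineq^{\st} \e{c}$, and using $\eq^{\st} \mathop{\subseteq} \iineq^{\st}$ (with commutativity of $\otimes$) and \eqref{eqn:iTra} to reach $\e{c} \ineq^{\st} \e{a}$; the meet of the two bounds is exactly $\e{a} \eq^{\st} \e{c}$. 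Compatibility \eqref{eqn:eCom} is discharged in the same spirit, replacing \eqref{eqn:iTra} by \eqref{eqn:iComF} and applying it once for each of the two containments.

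Once $\eq^{\st}$ is known to be an $\L$-equality, the remainder is light. By hypothesis $\ineq^{\st}$ already satisfies \eqref{eqn:iRef}, \eqref{eqn:iTra}, and \eqref{eqn:iComF}; moreover antisymmetry \eqref{eqn:iAsy} holds with equality, since $\e{a} \ineq^{\st} \e{b} \wedge \e{b} \ineq^{\st} \e{a}$ is by definition nothing but $\e{a} \eq^{\st} \e{b}$. Hence all standing hypotheses of Theorem~\ref{th:prop_ord} are met, and its condition \itm{4}, namely $\eq^{\st} \mathop{=} \ineq^{\st} \cap \iineq^{\st}$, holds by construction. The implication \itm{4}\,$\Rightarrow$\,\itm{1} then gives at once that $\llalg$ is an algebra with $\L$-equality and $\L$-order, which is the claim; in particular one need not verify \eqref{eqn:iComE} directly.

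The only genuinely delicate point is the separability half of \eqref{eqn:eRef}, which is precisely what the ``distinct elements'' hypothesis is there to supply; everything else is a routine meet-of-two-estimates argument, and the order axioms are obtained by quoting the earlier equivalence rather than re-deriving them.
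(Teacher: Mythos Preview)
Your proposal is correct and follows essentially the same route as the paper: first verify \eqref{eqn:eRef}--\eqref{eqn:eCom} for $\eq^{\st} = \ineq^{\st}\cap\iineq^{\st}$ via the meet-of-two-estimates argument (with separability coming from the distinct-elements hypothesis), then note that \eqref{eqn:iAsy} holds by definition and invoke Theorem~\ref{th:prop_ord} to obtain the remaining order axiom \eqref{eqn:iComE}. The paper's proof is structured identically, so there is nothing to add.
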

\begin{proof}
  First, we show that $\lalg$ is an algebra with $\L$-equality.
  Since $\eq^{\st} \mathop{=} \ineq^{\st} \cap \iineq^{\st}$, we get that
  $\eq^{\st}$ satisfies~\eqref{eqn:eSym}. If $\e{a} = \e{b}$ then
  \eqref{eqn:iRef} yields $\e{a} \eq^{\st} \e{b} = 1$. Conversely,
  if $\e{a} \ne \e{b}$, then $\e{a} \ineq^{\st} \e{b} \ne 1$ or
  $\e{b} \ineq^{\st} \e{a} \ne 1$, i.e., $\e{a} \eq^{\st} \e{b} \ne 1$.
  Altogether, $\eq^{\st}$ satisfies~\eqref{eqn:eRef}. Moreover, $\eq^{\st}$
  is $\otimes$-transitive. Indeed, for $\e{a},\e{b},\e{c} \in M$,
  using \eqref{eqn:iTra}, we have
  \begin{align*}
    \e{a} \eq^{\st} \e{b} \otimes \e{b} \eq^{\st} \e{c} &\leq
    \e{a} \ineq^{\st} \e{b} \otimes \e{b} \ineq^{\st} \e{c} \leq
    \e{a} \ineq^{\st} \e{c}
  \end{align*}
  and 
  \begin{align*}
    \e{a} \eq^{\st} \e{b} \otimes \e{b} \eq^{\st} \e{c} &\leq
    \e{b} \ineq^{\st} \e{a} \otimes \e{c} \ineq^{\st} \e{b} \leq
    \e{c} \ineq^{\st} \e{a}
  \end{align*}
  and thus 
  \begin{align*}
    \e{a} \eq^{\st} \e{b} \otimes \e{b} \eq^{\st} \e{c} &\leq
    \e{a} \ineq^{\st} \e{c} \wedge \e{c} \ineq^{\st} \e{a} = 
    \e{a} \eq^{\st} \e{c}
  \end{align*}
  which proves~\eqref{eqn:eTra}. In a similar way, we prove
  that~\eqref{eqn:eCom} is satisfied. Using~\eqref{eqn:iComF},
  \begin{align*}
    \e{a}_1 \eq^{\st} \e{b}_1 \otimes \cdots \otimes \e{a}_n \eq^{\st} \e{b}_n
    &\leq
    \e{a}_1 \ineq^{\st} \e{b}_1 \otimes \cdots \otimes \e{a}_n \ineq^{\st} \e{b}_n
    \\
    &\leq f^{\st}(\e{a}_1,\ldots,\e{a}_n) \ineq^{\st} f^{\st}(\e{b}_1,\ldots,\e{b}_n).
  \end{align*}
  And analogously for $\ineq$ replaced by $\iineq$ which together
  yield~\eqref{eqn:eCom}. As a consequence, $\lalg$ is an algebra
  with $\L$-equality. Now, in order to see that $\llalg$ is an algebra
  with $\L$-equality and $\L$-order, observe that~\eqref{eqn:iAsy} follows
  directly by $\eq^{\st} \mathop{=} \ineq^{\st} \cap \iineq^{\st}$
  and apply Theorem~\ref{th:prop_ord}.
\end{proof}

By virtue of Theorem~\ref{th:loalg}, we may call an algebra $\st = \llalg$
with $\L$-equality and $\L$-order simply an \emph{algebra with $\L$-order}
and write
\begin{align}
  \st = \loalg.
  \label{eqn:loalg}
\end{align}
Whenever we do so, we consider $\eq^{\st}$ to be the symmetric interior
of $\ineq^{\st}$.

\section{Examples}\label{sec:ex}
In this section, we show examples of algebras with fuzzy order. We focus on
examples of structures that are used further in the paper as well as strucures
that naturally appear in fuzzy relational systems.

\begin{example}\label{ex:discr}
  Recall that in the classic case, each algebra can be expanded to an ordered
  algebra by considering identity as the order. Such ordered algebras are usually
  called \emph{discrete}. In our case, for each algebra $\lalg$ with
  $\L$-equality $\eq^{\st}$ we may consider $\st = \llalg$ where
  $\ineq^{\st} \mathop{=} \eq^{\st}$.
  Observe that $\st$ is indeed an algebra with $\L$-order
  since \eqref{eqn:iRef} follows from \eqref{eqn:eRef},
  \eqref{eqn:iAsy} is satisfied trivially,
  \eqref{eqn:iTra} becomes \eqref{eqn:eTra},
  \eqref{eqn:iComF} becomes \eqref{eqn:eCom}, and
  then \eqref{eqn:iComE} follows by Theorem~\ref{th:prop_ord}.
  As an important consequence, each algebra with $\L$-equality can be
  expanded into an algebra with $\L$-order.
\end{example}

\begin{example}\label{ex:triv_cart}
  There are two important special cases of algebras with $\L$-orders which
  result as in Example~\ref{ex:discr}. First, $\st = \llalg$ is called
  \emph{trivial} if $M = \{\emptyset\}$, and both $\eq^{\st}$ and
  $\ineq^{\st}$ are identities. That is, the universe $M$ of $\st$ consists
  of a single element (the empty set $\emptyset$) and
  $\emptyset \eq^{\st} \emptyset =
  \emptyset \ineq^{\st} \emptyset = 1$.
  Second, let $X$ be a set of variables and let $T(X)$ denote the set of all
  terms in variables $X$ (using function symbols in $F$).
  If $T(X) \ne \emptyset$ (e.g., if $F$ contains a nullary function symbol
  or $X$ is non-empty), we denote by $\TX$ the algebra
  $\langle T(X),\eq^{\TX},\ineq^{\TX},F^{\TX}\rangle$ with
  $\L$-equality and $\L$-order where both $\eq^{\st}$ and $\ineq^{\st}$
  are identities, i.e.,
  \begin{align*}
    t \ineq^{\TX} t' &=
    \left\{
      \begin{array}{@{\,}l@{\quad}l@{}}
      1, & \text{if } t = t', \\
      0, & \text{otherwise,}
    \end{array}
    \right.
  \end{align*}
  and analogously for $\eq^{\TX}$. Thus, $\TX$ results from an ordinary
  term algebra by adding $\eq^{\TX}$ and $\ineq^{\TX}$, both being identity
  $\mathbf{L}$-relations on $\TX$. We therefore call $\TX$
  the (\emph{absolutely free}) \emph{term algebra with $\L$-order}
  over variables in $X$. We shall see later in the paper that the trivial
  and term algebras are two borderline cases among algebras with $\L$-order
  (of the same type) in terms of the satisfaction of inequlities and play
  analogous roles as their classic counterparts.
\end{example}

In the next example, we intend to demonstrate that algebras with $\L$-orders
may naturally appear as structures describing entailment degrees of graded
if-then formulas. For the purpose of illustration, we briefly recall fuzzy
attribute implications (FAIs) and their semantic entailment~\cite{BeVy:ADfDwG}.
Consider a non-empty set $Y$ the elements of which are called \emph{attributes.}
Each map $A\!: Y \to L$ (an $\L$-set in $Y$ in the usual terminology of fuzzy
relational systems~\cite{Bel:FRS,Gog:Lic}) prescribes a degree $A(y) \in L$
to each attribute $y \in Y$. The degree $A(y)$ is interpreted as
``a degree to which an object has attribute $y$'', i.e.,
$A$ may be interpreted as a collection of graded attributes of an object.
The set of all maps of the form $A\!: Y \to L$ is denoted by $L^Y$.

For $A,B \in L^Y$, we may introduce
a \emph{degree to which $A$ is included in $B$} by
\begin{align}
  S(A,B) &= \textstyle\bigwedge_{y \in Y}\bigl(A(y) \rightarrow B(y)\bigr),
  \label{eqn:S}
\end{align}
where $\rightarrow$ is the residuum in $\L$.
A \emph{fuzzy} (or \emph{graded}) \emph{attribute implication}
(shortly, a FAI)
is any expression of the from $A \Rightarrow B$, where $A,B \in L^Y$.
The \emph{degree to which $A \Rightarrow B$ is true in $M \in L^Y$},
written $||A \Rightarrow B||_M$ is defined using~\eqref{eqn:S} as follows:
\begin{align}
  ||A \Rightarrow B||_M &= S(A,M) \rightarrow S(B,M).
  \label{eqn:semM}
\end{align}
Assuming that $M$ is a collection of graded attributes of an object $x$,
we may interpret $||A \Rightarrow B||_M$ as the degree to which it is true
that ``if $x$ has all the (graded) attributes from $A$, then $x$ has all
the (graded) attributes from $B$''. Therefore, graded attribute implications
describe if-then dependencies between graded attributes of objects.
Let us note that the approach in~\cite{BeVy:ADfDwG} is more general in that
the definition~\eqref{eqn:semM} is parameterized by a linguistic
hedge~\cite{BeVy:Fcalh,EsGoNo:Hedges,Za:Afstilh}
but we omit this technical issue for the sake of simplicity.

One of the basic problems related to FAIs is semantic entailment of FAIs
from (graded) collections of other FAIs, its characterization, axiomatization,
and other related problems, cf.~\cite{BeVy:ADfDwG}. Denote the set of
all FAIs (in $Y$) by $\mathit{Fml}$ and let $T$ be
a map $T\!: \mathit{Fml} \to L$. Each $T(A \Rightarrow B)$ may be seen
as a degree to which $A \Rightarrow B$ is prescribed by $T$. We put
\begin{align}
  \mathrm{Mod}(T) &= \{M \in L^Y\!;\,
  T(A \Rightarrow B) \leq ||A \Rightarrow B||_T \text{ for all } A,B \in L^Y\}
  \label{eqn:fai_mod}
\end{align}
and call $\mathrm{Mod}(T)$ the set of all models of $T$. Moreover, we put
\begin{align}
  ||A \Rightarrow B||_T &=
  \textstyle\bigwedge_{M \in \mathrm{Mod}(T)}||A \Rightarrow B||_M
  \label{eqn:fai_sement}
\end{align}
and call $||A \Rightarrow B||_T$ the
\emph{degree to which $A \Rightarrow B$ is semantically entailed by~$T$.}
Note that the way we have defined~\eqref{eqn:fai_sement} conforms to the
graded semantics of formulas as it was proposed by
Pavelka~\cite{Pav:Ofl1,Pav:Ofl2,Pav:Ofl3},
cf. also~\cite{Ger:FL} and~\cite[Section~9.2]{Haj:MFL}.
The next example shows that the entailment degrees may be represented
by an algebra with $\L$-order which is related to~\eqref{eqn:fai_sement}.

\begin{example}\label{ex:fai}
  Assuming that $T$ is a set of FAIs in $Y$, we may introduce an algebra
  of classes of semantically equivalent elements in $L^Y$ with
  the $\L$-order corresponding to~\eqref{eqn:fai_sement}. First, we put
  \begin{itemize}
  \item
    $L^{Y\!}/T =
    \bigl\{\cl[T]{A};\, A \in L^Y\bigr\}$, where
  \item
    $\cl[T]{A} =
    \{B \in L^Y\!;\, ||A \Rightarrow B||_T = ||B \Rightarrow A||_T = 1\}$.
  \end{itemize}
  Obviously, $L^{Y\!}/T$ is a partition of $L^Y$ and
  each $\cl[T]{A} \in L^{Y\!}/T$ contains all $B \in L^Y$ which are equivalent
  to $A$ under $T$. Now, consider
  \begin{align*}
    \st_T =
    \langle L^{Y\!}/T, \ineq^{\st_T}, \cup^{\st_T}, 0^{\st_T}, 1^{\st_T}\rangle,
  \end{align*}
  where $0^{\st_T} = \cl[T]{0_Y}$ (with $0_Y(y) = 0$ for all $y \in Y$),
  $1^{\st_T} = \cl[T]{1_Y}$ (with $1_Y(y) = 1$ for all $y \in Y$),
  $\cl[T]{A} \cup^{\st_T} \cl[T]{B} = \cl[T]{A \cup B}$, and
  \begin{align*}
    \cl[T]{A} \ineq^{\st_T} \cl[T]{B} &= ||A \Rightarrow B||_T.
  \end{align*}
  We claim that $\st_T$ is an algebra with $\L$-order. In order to see that,
  we have to check that $\cup^{\st_T}$ and $\ineq^{\st_T}$ are well defined
  and that all conditions required by Theorem~\ref{th:loalg} hold. The fact
  that $\cup^{\st_T}$ is well defined follows directly by the facts that
  $S(A,M) \wedge S(B,M) = S(A \cup B,M)$ is true for all $A,B,M \in L^Y$ and
  that $||A \Rightarrow B||_T = 1$ means $S(A,M) \leq S(B,M)$
  for each $M \in \mathrm{Mod}(T)$.
  In case of $\ineq^{\st_T}$, we show the argument in a more detail:
  For $A' \in \cl[T]{A}$ and $B' \in \cl[T]{B}$,
  we have $||A' \Rightarrow A||_T = 1$ and 
  $||B \Rightarrow B'||_T = 1$ from which it follows that
  \begin{align*}
    1 \otimes ||A \Rightarrow B||_T \otimes 1
    &= ||A' \Rightarrow A||_T \otimes
    ||A \Rightarrow B||_T \otimes
    ||B \Rightarrow B'||_T \\
    &\leq ||A' \Rightarrow B'||_T,
  \end{align*}
  because
  \begin{align}
    ||A \Rightarrow B||_T \otimes ||B \Rightarrow C||_T &\leq
    \textstyle\bigwedge_{M \in \mathrm{Mod}(T)}
    \bigl(||A \Rightarrow B||_M \otimes ||B \Rightarrow C||_M\bigr) \notag \\
    &\leq
    \textstyle\bigwedge_{M \in \mathrm{Mod}(T)}
    \bigl(S(A,M) \rightarrow S(C,M)\bigr) \notag \\
    &= ||A \Rightarrow C||_T
    \label{eqn:prTra}
  \end{align}
  is true for all $A,B,C \in L^Y$.
  Analogously, we prove $||A' \Rightarrow B'||_T \leq ||A \Rightarrow B||_T$.
  As a consequence, $\ineq^{\st_T}$ is well defined. Furthermore, 
  \eqref{eqn:iRef} holds because $||A \Rightarrow A||_T = 1$ is true and
  \eqref{eqn:iTra} is a consequence of~\eqref{eqn:prTra}. In order to check
  \eqref{eqn:iComF}, it suffices to show that
  \begin{align*}
    ||A \Rightarrow B||_T \otimes ||C \Rightarrow D||_T &\leq
    ||A \cup C \Rightarrow B \cup D||_T
  \end{align*}
  which is indeed the case: This follows by similar arguments as before,
  utilizing the fact that $(a \rightarrow b) \otimes (c \rightarrow d) \leq
  (a \wedge c) \rightarrow (b \wedge d)$ holds for all $a,b,c,d \in L$ in
  any residuated lattice~\cite{GaJiKoOn:RL}.
  Altogether, Theorem~\ref{th:loalg} yields that
  $\st_T$ is an algebra with $\L$-order and the induced $\L$-equality is
  \begin{align*}
    \cl[T]{A} \eq^{\st_T} \cl[T]{B} &=
    ||A \Rightarrow B||_T \wedge ||B \Rightarrow A||_T.
  \end{align*}
  Therefore, $\st_T$ may be understood as an algebraic representation of
  the semantic closure of $T$ with $\cl[T]{A} \eq^{\st_T} \cl[T]{B}$ being
  the degree to which $A$ and $B$ are equivalent under $T$ and 
  $\cl[T]{A} \ineq^{\st_T} \cl[T]{B}$ being the degree to which
  $B$ follows from $A$ under $T$.

  Note that $\st_T$ can be further extended, e.g., by considering unary
  operations $\otimes^a$ ($a \in L$) defined by
  $\otimes^a(\cl[T]{A}) = \cl[T]{a{\otimes}A}$ where $a{\otimes}A \in L^Y$
  is defined by $(a{\otimes}A)(y) = a \otimes A(y)$ for all $y \in Y$.
  Interestingly, structures like $\st_T$ can also be defined based on
  syntactic entailment of FAIs and can be regarded as particular
  Lindenbaum algebras with fuzzy order. We refrain from further details
  because we do not want to digress too much from the main subject of
  this paper which are properties of classes of general structures.
\end{example}

\section{Algebraic Constructions}\label{sec:ac}
In this section, we summarize the basic algebraic constructions essential for
establishing an analogy of the variety theorem in Section~\ref{sec:vars}. The
constructions generalize the algebraic constructions presented
in~\cite{BeVy:Awfs} in a way that by omitting the fuzzy order,
we get exactly the algebraic constructions with algebras with fuzzy equalities
as they have been proposed and described in~\cite{BeVy:Awfs}. Detailed
description of operations for ordinary algebras and algebras with
$\L$-equalities and can be found in \cite{BeVy:FEL,BuSa:CUA}.

\subsubsection*{Subalgebras}
Let $\st = \llalg$ be an algebra with $\L$-order. A subuniverse $N$ of $\st$
is a subset of $M$ which is closed under all functions from $F^{\st}$.
If $N \ne \emptyset$ is a subuniverse of $\st$, then
$\st[N] = \llalg[N]$ where
\begin{itemize}
\item
  $\eq^{\st[N]}$ is restriction of $\eq^{\st}$ to $N$
  (i.e., $\e{a} \eq^{\st[N]} \e{b} = \e{a} \eq^{\st} \e{b}$
  for all $\e{a},\e{b} \in N$);
\item
  $\ineq^{\st[N]}$ is restriction of $\ineq^{\st}$ to $N$
  (i.e., $\e{a} \ineq^{\st[N]} \e{b} = \e{a} \ineq^{\st} \e{b}$
  for all $\e{a},\e{b} \in N$);
\item
  each $f^{\st[N]}$ is a restriction of $f^{\st}$ to $N$
  (i.e., $f^{\st[N]}(\e{a}_1,\ldots,\e{a}_n) = f^{\st}(\e{a}_1,\ldots,\e{a}_n)$
  for all $\e{a}_1,\ldots,\e{a}_n \in N$)
\end{itemize}
is called a \emph{subalgebra} of $\st$.

\subsubsection*{Direct Products}
Let $\st_i$ ($i \in I$) be an $I$-indexed family of algebras with $\L$-order
(of the same type $F$). A \emph{direct product} $\dprod_{i \in I}\st_i$ of 
$\st_i$ ($i \in I$) is an algebra with $\L$-order 
$\dprod_{i \in I}\st_i =
\langle \dprod_{i \in I}M_i,
\ineq^{\dprod_{i \in I}{\st_i}},F^{\dprod_{i \in I}{\st_i}}\rangle$
defined on the direct product $\dprod_{i \in I}M_i$ of the universe sets of
all $\st_i$, where the operations are defined componentwise by
\begin{align}
  f^{\dprod_{i \in I}{\st_i}}(\e{a}_1,\ldots,\e{a}_n)(i) &=
  f^{\st_i}\bigl(\e{a}_1(i),\ldots,\e{a}_n(i)\bigr)
\end{align}
for any $n$-ary $f \in F$, $\e{a}_1,\ldots,\e{a}_n \in \dprod_{i \in I}M_i$,
and $i \in I$; and $\ineq^{\dprod_{i \in I}{\st_i}}$ is introduced as follows:
\begin{align}
  \e{a} \ineq^{\dst} \e{b} &=
  \textstyle\bigwedge_{i \in I}
  \e{a}(i) \ineq^{\st[M]_i} \e{b}(i),
  \label{eqn:dirineq}
\end{align}
for all $\e{a},\e{b} \in \dprod_{i \in I}M_i$. It it easily seen
that $\dprod_{i \in I}{\st_i}$ is indeed an algebra with $\L$-order since
it satisfies the conditions of Theorem~\ref{th:loalg}. As a consequence,
for the corresponding $\L$-equality $\eq^{\dprod_{i \in I}\st_i}$, we get
\begin{align}
  \e{a} \eq^{\dst} \e{b} &=
  \textstyle\bigwedge_{i \in I}
  \e{a}(i) \eq^{\st[M]_i} \e{b}(i).
\end{align}
Analogously as in the ordinary setting, the direct product of an empty
family of algebras with $\L$-order is the trivial algebra with $\L$-order,
see Example~\ref{ex:triv_cart}.

\subsubsection*{Homomorphisms and Factor Algebras}
Let $\st$ and $\st[N]$ be algebras with $\L$-orders (of the same type $F$).
A map $h\!: M \to N$ which satisfies equality 
\begin{align}
  h\bigl(f^{\st}(\e{a}_1,\ldots,\e{a}_n)\bigr) &=
  f^{\st[N]}\bigl(h(\e{a}_1),\ldots,h(\e{a}_n)\bigr)
  \label{eqn:morff}
\end{align}
for any $n$-ary $f \in F$ and all $\e{a}_1,\ldots,\e{a}_n \in M$; and 
\begin{align}
  \e{a} \ineq^{\st} \e{b} &\leq h(\e{a}) \ineq^{\st[N]} h(\e{b})
  \label{eqn:morf}
\end{align}
for all $\e{a},\e{b} \in M$
is called a \emph{homomorphism} and is denoted by $h\!: \st \to \st[N]$.
Therefore, homomorphisms are maps which are compatible with the functional
parts of $\st$ and $\st[N]$ and the $\L$-orders of $\st$ and $\st[N]$.
As a consequence, we get that homomorphisms are also compatible with
$\L$-equalities of $\st$ and $\st[N]$ since for any $\e{a},\e{b} \in M$,
it follows that
\begin{align}
  \e{a} \eq^{\st} \e{b} &\leq h(\e{a}) \eq^{\st[N]} h(\e{b}).
\end{align}
A homomorphism $h\!: \st \to \st[N]$ is called an \emph{embedding} whenever
\begin{align}
  \e{a} \ineq^{\st} \e{b} &= h(\e{a}) \ineq^{\st[N]} h(\e{b})
  \label{eqn:embed}
\end{align}
holds for all $\e{a},\e{b} \in M$. If $h\!: \st \to \st[N]$ is
an embedding then 
\begin{align}
  \e{a} \eq^{\st} \e{b} &= h(\e{a}) \eq^{\st[N]} h(\e{b})
\end{align}
for any $\e{a},\e{b} \in M$, which is easy to see,
cf.~\cite[Lemma 4.46]{Bel:FRS}. If $h\!: \st \to \st[N]$ is surjective,
then $\st[N]$ is called a (\emph{homomorphic}) \emph{image} of $\st$.
As usual, surjective embeddings may be called \emph{isomorphisms.}

\begin{remark}\label{rem:compos}
  Note that we may consider a category consisting of algebras with $\L$-orders
  as objects, homomorphisms of algebras with $\L$-orders as arrows, with
  the composition $\circ$ being the usual composition of maps between sets,
  and identity arrows being the usual identity maps. That is, for
  $f\!: \st[M] \to \st[N]$ and $g\!: \st[N] \to \st[P]$, we put
  $(f \circ g)(\e{a}) = g(f(\e{a}))$
  (written in the diagrammatic notation) for all $\e{a} \in M$.
  Clearly, $f \circ g\!: \st[M] \to \st[P]$ is a homomorphism of
  algebras with $\L$-orders. Let us note that surjective homomorphisms
  are exactly the epic arrows and embeddings are monic but there are
  monic arrows which are not embeddings.
\end{remark}

In the paper, we further utilize the fact that homomorphic images may
be represented by factor algebras with $\L$-orders. Consider an
algebra $\st$ with $\L$-order.
A binary $\L$-relation $\qc$ on $M$ is called
an \emph{$\L$-preorder compatible with $\st$} whenever it satisfies
\begin{align}
  \ineq^{\st} &\subseteq \qc,
  \label{eqn:qcRef} \\
  \qc(\e{a},\e{b}) \otimes \qc(\e{b},\e{c}) &\leq \qc(\e{a},\e{c}),
  \label{eqn:qcTra} \\
  \qc(\e{a}_1,\e{b}_1) \otimes \cdots \otimes
  \qc(\e{a}_n,\e{b}_n)
  &\leq
  \qc\bigl(f^{\st}(\e{a}_1,\ldots,\e{a}_n),
  f^{\st}(\e{b}_1,\ldots,\e{b}_n)\bigr),
  \label{eqn:qcCom}
\end{align}
for all $\e{a},\e{b},\e{c},\e{a}_1,\e{b}_1,\ldots,\e{a}_n,\e{b}_n \in M$
and any $n$-ary $f \in F$. It can be shown that compatible $\L$-preorders
on $\st = \llalg$ induce congruences~\cite[Definition 3.3]{BeVy:Awfs}
on $\lalg$. Indeed, for $\icng$ being the
symmetric interior of $\qc$, i.e.,
for $\icng = \qc \cap \qc^{-1}$, we have
\begin{align}
  \eq^{\st} &\subseteq \icng, \\
  \icng(\e{a},\e{b}) &= \icng(\e{b},\e{a}), \\
  \icng(\e{a},\e{b}) \otimes \icng(\e{b},\e{c}) &\leq \icng(\e{a},\e{c}), \\
  \icng(\e{a}_1,\e{b}_1) \otimes \cdots \otimes
  \icng(\e{a}_n,\e{b}_n)
  &\leq
  \icng\bigl(f^{\st}(\e{a}_1,\ldots,\e{a}_n),
  f^{\st}(\e{b}_1,\ldots,\e{b}_n)\bigr).
\end{align}
Hence, according to~\cite{BeVy:Awfs}, we may consider a factor algebra with
$\L$-equality $\st/\icng = \langle M/\icng,\eq^{\st/\icng},F^{\st/\icng}\rangle$
as follows:
\begin{itemize}
\item
  $M/\icng = \bigl\{\cl[\icng]{\e{a}};\, \e{a} \in M\bigr\}$ where
  $\cl[\icng]{\e{a}} = \{\e{b} \in M;\, \icng(\e{a},\e{b}) = 1\}$;
\item
  $f^{\st/\icng}\bigl(\cl[\icng]{\e{a}_1},\ldots,\cl[\icng]{\e{a}_n}\bigr) =
  \cl[\icng]{f^{\st}(\e{a}_1,\ldots,\e{a}_n)}$;
\item
  $\cl[\icng]{\e{a}} \eq^{\st/\icng} \cl[\icng]{\e{b}} = \icng(\e{a},\e{b})$.
\end{itemize}
That is, $\st/\icng$ can be seen as an ordinary factor algebra $\alg$ modulo
$\{\langle\e{a},\e{b}\rangle;\, \icng(\e{a},\e{b}) = 1\}$ which is in addition
equipped with $\eq^{\st/\icng}$ defined as above.
We put $M/\qc = M/\icng$, $\cl{\e{a}} = \cl[\icng]{\e{a}}$,
$f^{\st/\qc} = f^{\st/\icng}$, $\eq^{\st/\qc} \mathop{=} \eq^{\st/\icng}$ and 
define $\ineq^{\st/\qc}$ by
\begin{align}
  \cl{\e{a}} \ineq^{\st/\qc} \cl{\e{b}} = \qc(\e{a},\e{b}).
  \label{eqn:ineq_fac}
\end{align}
The resulting structure $\st/\qc = \langle M/\qc,\eq^{\st/\qc},
\ineq^{\st/\qc},F^{\st/\qc}\rangle$ is called
a \emph{factor algebra with $\L$-order} of $\st$ modulo $\qc$.
The following lemma shows that $\st/\qc$ is indeed an algebra with $\L$-order:

\begin{lemma}
  $\ineq^{\st/\qc}$ defined by~\eqref{eqn:ineq_fac} is well defined
  and satisfies~\eqref{eqn:iRef}--\eqref{eqn:iComF} with respect
  to $\eq^{\st/\qc}$. As a consequence, $\st/\qc$ is a well-defined
  algebra with $\L$-order.
\end{lemma}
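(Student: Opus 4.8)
The plan is to verify the order axioms \eqref{eqn:iRef}, \eqref{eqn:iTra}, and \eqref{eqn:iComF} for $\ineq^{\st/\qc}$ by directly translating the defining properties \eqref{eqn:qcRef}, \eqref{eqn:qcTra}, and \eqref{eqn:qcCom} of the compatible $\L$-preorder $\qc$, and then to obtain the two remaining conditions \eqref{eqn:iAsy} and \eqref{eqn:iComE} ``for free'' from Theorem~\ref{th:prop_ord}. This is possible because the $\L$-equality reduct $\st/\icng$ is already known to be an algebra with $\L$-equality by the construction cited from~\cite{BeVy:Awfs}, so once the four hypotheses of Theorem~\ref{th:prop_ord} are in place I only need to exhibit one of its equivalent conditions---and condition \itm{4} will be immediate from the definitions. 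Before any of this, however, I must check that \eqref{eqn:ineq_fac} does not depend on the chosen representatives, which is the one step carrying real content.

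For well-definedness, suppose $\cl{\e{a}} = \cl{\e{a}'}$ and $\cl{\e{b}} = \cl{\e{b}'}$. By the description of the classes (using reflexivity of $\icng$) this means $\icng(\e{a},\e{a}') = \icng(\e{b},\e{b}') = 1$, and since $\icng = \qc \cap \iqc$ this forces all four degrees $\qc(\e{a},\e{a}')$, $\qc(\e{a}',\e{a})$, $\qc(\e{b},\e{b}')$, $\qc(\e{b}',\e{b})$ to equal $1$. Applying $\otimes$-transitivity \eqref{eqn:qcTra} twice along $\e{a}' \to \e{a} \to \e{b} \to \e{b}'$ and using that $1$ is the unit of $\otimes$ gives $\qc(\e{a},\e{b}) = 1 \otimes \qc(\e{a},\e{b}) \otimes 1 \leq \qc(\e{a}',\e{b}')$; the symmetric chain $\e{a} \to \e{a}' \to \e{b}' \to \e{b}$ gives the reverse inequality, so $\qc(\e{a},\e{b}) = \qc(\e{a}',\e{b}')$ and $\ineq^{\st/\qc}$ is well defined. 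I expect this sandwiching argument to be the main (indeed the only nontrivial) obstacle.

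The three order axioms then fall out immediately. Reflexivity \eqref{eqn:iRef} follows since $\cl{\e{a}} \ineq^{\st/\qc} \cl{\e{a}} = \qc(\e{a},\e{a}) \geq \e{a} \ineq^{\st} \e{a} = 1$ using \eqref{eqn:qcRef} and the reflexivity of $\ineq^{\st}$; $\otimes$-transitivity \eqref{eqn:iTra} is just \eqref{eqn:qcTra} read on representatives; and compatibility \eqref{eqn:iComF} is \eqref{eqn:qcCom} read on representatives together with the defining equality $f^{\st/\qc}(\cl{\e{a}_1},\dots,\cl{\e{a}_n}) = \cl{f^{\st}(\e{a}_1,\dots,\e{a}_n)}$. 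Finally, from $\cl{\e{a}} \eq^{\st/\qc} \cl{\e{b}} = \icng(\e{a},\e{b}) = \qc(\e{a},\e{b}) \wedge \qc(\e{b},\e{a})$ I read off directly that $\eq^{\st/\qc} = \ineq^{\st/\qc} \cap \iineq^{\st/\qc}$, which is both the antisymmetry condition \eqref{eqn:iAsy} (in fact as an equality) and condition \itm{4} of Theorem~\ref{th:prop_ord}. Having verified \eqref{eqn:iRef}, \eqref{eqn:iAsy}, \eqref{eqn:iTra}, \eqref{eqn:iComF}, I invoke Theorem~\ref{th:prop_ord}: condition \itm{4} yields \itm{1}, hence \eqref{eqn:iComE} holds and $\st/\qc$ is an algebra with $\L$-equality and $\L$-order, completing the proof.
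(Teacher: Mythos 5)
Your proposal is correct and follows essentially the same route as the paper: the same sandwiching argument via $\otimes$-transitivity for well-definedness, the direct translation of \eqref{eqn:qcRef}--\eqref{eqn:qcCom} into \eqref{eqn:iRef}, \eqref{eqn:iTra}, \eqref{eqn:iComF}, the observation that $\eq^{\st/\qc} = \ineq^{\st/\qc} \cap \iineq^{\st/\qc}$ gives \eqref{eqn:iAsy}, and the appeal to Theorem~\ref{th:prop_ord} for \eqref{eqn:iComE}. No gaps.
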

\begin{proof}
  We first show that the value of $\cl{\e{a}} \ineq^{\st/\qc} \cl{\e{b}}$
  does not depend on the choice of elements from either class. Thus,
  take $\e{a}' \in \cl{\e{a}}$ and $\e{b}' \in \cl{\e{b}}$. By definition,
  we get $\icng(\e{a},\e{a}') = 1$ for $\icng = \qc \cap \qc^{-1}$ and
  thus $\qc(\e{a},\e{a}') = 1$ and $\qc(\e{a}',\e{a}) = 1$. Analogously,
  $\qc(\e{b},\e{b}') = 1$ and $\qc(\e{b}',\e{b}) = 1$.
  Applying~\eqref{eqn:qcTra} twice, we get
  \begin{align*}
    1 \otimes 1 \otimes \qc(\e{a},\e{b}) =
    \qc(\e{a}',\e{a}) \otimes \qc(\e{b},\e{b}') \otimes \qc(\e{a},\e{b}) \leq
    \qc(\e{a}',\e{b}')
  \end{align*}
  and analogously $\qc(\e{a}',\e{b}') \leq \qc(\e{a},\e{b})$,
  meaning $\qc(\e{a},\e{b}) = \qc(\e{a}',\e{b}')$, i.e.,
  \eqref{eqn:ineq_fac} is well defined.
  Now, observe that \eqref{eqn:iRef} follows by~\eqref{eqn:qcRef}.
  Moreover, \eqref{eqn:iAsy} follows by the fact that
  \begin{align*}
    \cl[\qc]{\e{a}} \eq^{\st/\qc} \cl[\qc]{\e{b}} = \icng(\e{a},\e{b}) 
    &=
    \qc(\e{a},\e{b}) \wedge \qc(\e{b},\e{a}) \\
    &= \cl[\qc]{\e{a}} \ineq^{\st/\qc} \cl[\qc]{\e{b}}
    \wedge
    \cl[\qc]{\e{b}} \ineq^{\st/\qc} \cl[\qc]{\e{a}},
  \end{align*}
  i.e., $\eq^{\st/\qc} \mathop{=} \ineq^{\st/\qc} \cap \iineq^{\st/\qc}$.
  In addition to that, \eqref{eqn:iTra} follows by \eqref{eqn:qcTra} and
  \eqref{eqn:iComF} follows by \eqref{eqn:qcCom}. Finally,
  Theorem~\ref{th:prop_ord} yields \eqref{eqn:iComE}.
\end{proof}

\begin{remark}
  Let us note that $\st_T$ in Example~\ref{ex:fai} may be seen as
  a factorization of
  $\st = \langle L^Y,\ineq^{\st},\cup^{\st},0_Y,1_Y\rangle$,
  where $A \ineq^{\st} B = S(B,A)$ and $(A \cup^{\st} B)(y) = A(y) \vee B(y)$
  (a union of $\L$-sets defined componentwise using $\vee$ in $\L$).
  In particular, we may take a compatible $\L$-preorder $\qc$ induced by $T$,
  i.e., $\qc(A,B) = ||A \Rightarrow B||_T$. Under this notation, $\st_T$
  coincides with $\st/\qc$.
\end{remark}

Given an $\L$-preorder $\qc$ which is compatible with $\st$, we may introduce
a surjective map $h_{\qc}\!: M \to M/\qc$ by putting
\begin{align}
  h_{\qc}(\e{a}) &= \cl{\e{a}}.
  \label{eqn:nat}
\end{align}
The map is called a \emph{natural homomorphism} induced by $\qc$ and it
is indeed a homomorphism which is in addition surjective. Thus, factor
algebras with $\L$-orders are homomorphic images. Moreover,
for a homomorphism $h\!: \st \to \st[N]$, we may introduce
a binary $\L$-relation $\qc_h$ on $M$ by putting
\begin{align}
  \qc_h(\e{a},\e{b}) &= h(\e{a}) \ineq^{\st[N]} h(\e{b}).
  \label{eqn:hom_th}
\end{align}
Analogously as in case of algebras with $\L$-equalities~\cite{BeVy:Awfs},
one may check that $\qc_h$ is a compatible $\L$-preorder on $\st$ and 
$\st/\qc_h$ can be embedded into $\st[N]$. Namely, one can introduce an
embedding $g\!: \st/\qc_h \to \st[N]$ by putting
$g(\cl[\qc_h]{\e{a}}) = h(\e{a})$ and show that $h = h_{\qc_h} \circ g$.
In addition, if $h$ is surjective then $g$ is an isomorphism,
cf.~\cite[Theorem 3.8]{BeVy:Awfs}.
As a result, homomorphic images of $\st$ are isomorphic to
factor algebras of $\st$
which is a desirable property.

\section{Varieties of Algebras with Fuzzy Order}\label{sec:vars}
In this section, we provide an analogy of the ordered variety theorem by
Bloom~\cite{Bloom}. We prove an assertion showing that classes of algebras
with fuzzy order defined by graded inequalities are exactly the classes of
algebras with fuzzy order closed under the formations of subalgebras,
homomorphic images, and direct products.

We start with proving the closure properties of classes definable by graded
inequalities. Collections of graded inequalities are formalized as
$\L$-sets of formulas. In a more detail, an atomic formula $t \ineq t'$
where $t,t' \in T(X)$ are terms, is called an \emph{inequality.} Then,
an $\L$-set $\Sigma$ of inequalities, called a \emph{theory},
can be seen as a representation of a collection of graded inequalities
using terms in $T(X)$. Indeed, for each $t,t' \in T(X)$, $\Sigma$ prescribes
a degree $\Sigma(t \ineq t')$ which can be interpreted as a lower bound of
a degree to which $t \ineq t'$ shall be satisfied in a model. Clearly,
the standard understanding of theories as sets of formulas can be viewed
as a particular case of the concept of theories as $\L$-sets of formulas
since $\Sigma(t \ineq t') = 1$ prescribes that $t \ineq t'$ shall be satisfied
(fully) in a model of $\Sigma$ and $\Sigma(t \ineq t') = 0$ means that
in a model of $\Sigma$ the inequality $t \ineq t'$ need not be satisfied
at all. In other words, $\Sigma(t \ineq t') = 0$ puts no constraint on
models with respect to the satisfaction of $t \ineq t'$.

In order to define models precisely, we utilize the following notation.
For an algebra $\st$ with $\L$-order, any map $v\!: X \to M$ is called
an $\st$-valuation of variables in $X$, i.e., the result $v(x)$ is the
value of $x$ in $\st$ under $v$. As in the ordinary case, $v$ admits
a unique homomorphic extension $\home{v}$ to $\TX$. Thus, $\home{v}(x) = v(x)$
for all $x \in X$ and $\home{v}$ is a map $\home{v}\!: \TX \to \st$.
For $t \in T(X)$, the value $\home{v}(t)$ is called
the value of $t$ in $\st$ under $v$. Note that in the literature on fuzzy
logics in the narrow sense~\cite{Bel:FRS,BeVy:FEL,Haj:MFL},
$\home{v}(t)$ is often denoted by $\val{t}$.
Now, for any inequality $t \ineq t'$, we may introduce the degree
to which $t \ineq t'$ is true in $\st$ under $v$ by
\begin{align}
  \val{t \ineq t'} &= \home{v}(t) \ineq^{\st} \home{v}(t')
  \label{eqn:valMv}
\end{align}
and the degree to which $t \ineq t'$ is true in $\st$
(under all $\st$-valuations):
\begin{align}
  \val[\st]{t \ineq t'} &= \textstyle\bigwedge_{v: X \to M}\val{t \ineq t'}.
  \label{eqn:valM}
\end{align}
Under this notation, we call $\st$ a model of $\Sigma$ if
$\Sigma(t \ineq t') \leq \val[\st]{t \ineq t'}$ for all $t,t' \in T(X)$ and
denote the class of all models of $\Sigma$ by $\Mod$. That is,
\begin{align}
  \Mod &= \bigl\{\st;\, \Sigma(t \ineq t') \leq \val[\st]{t \ineq t'}
  \text{ for all } t,t' \in T(X)\bigr\}.
\end{align}
A class $\K$ of algebras with $\L$-orders
is called an \emph{inequational class} if there is
$\Sigma$ such that $\K = \Mod$.
Furthermore, if $\K$ is a class of algebras with $\L$-orders, we define
a degree to which $t \ineq t'$ is true in (all algebras in) $\K$ by
\begin{align}
  \val[\K]{t \ineq t'} &= \textstyle\bigwedge_{\st \in \K}\val[\st]{t \ineq t'}.
  \label{eqn:valK}
\end{align}
In particular, for $\K = \Mod$, $\val[\Mod]{t \ineq t'}$ defined
by \eqref{eqn:valK} is called a degree to which $t \ineq t'$ is semantically
entailed by $\Sigma$.

\begin{remark}
  The way we consider theories as $\L$-sets of formulas, define models
  of theories, and define degrees of semantic entailment is consistent with
  Pavelka's general approach to abstract logic
  semantics \cite{Pav:Ofl1,Pav:Ofl2,Pav:Ofl3}. Clearly, if $\st$ is
  the trivial algebra with $\L$-order, then $\val[\st]{t \ineq t'} = 1$
  for any $t,t' \in T(X)$. In contrast, $\val[\TX]{t \ineq t'} = 1$ if
  $t = t'$ and $\val[\TX]{t \ineq t'} = 0$ otherwise, i.e.,
  all non-trivial inequalities are satisfied in $\TX$ to a zero degree.
\end{remark}

The following assertion shows the basic closure properties of $\Mod$.
The proof of the assertion uses analogous arguments as in the case of equational
classes, i.e., the classes of algebras with fuzzy equalities defined by
$\L$-sets of identities~\cite{Be:Bvtafl}.
We therefore present only a sketch of the proof,
omitting technical details.

\begin{theorem}\label{thm:ineq_var}
  Let $\Sigma$ be an $\L$-set of inequalities.
  Then, $\Mod$ is closed under formations of subalgebras,
  homomorphic images, and direct products.
\end{theorem}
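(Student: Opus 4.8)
The plan is to establish the three closure properties separately, in each case reducing the goal to the single inequality $\Sigma(t \ineq t') \leq \val[\st']{t \ineq t'}$ for all $t,t' \in T(X)$, where $\st'$ denotes the subalgebra, homomorphic image, or direct product under consideration. The common engine is a routine induction on term structure relating the homomorphic extension $\home{v}$ of a valuation in $\st'$ to the corresponding extension(s) in the algebra(s) that $\st'$ is built from; granting that, the inequality follows from monotonicity of $\bigwedge$ in the complete lattice $\L$ together with the hypothesis that the source algebras lie in $\Mod$.

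\textbf{Subalgebras and products.} For a subalgebra $\st[N]$ of $\st \in \Mod$, first I would observe that a valuation $v\!: X \to N$ is at the same time an $\st$-valuation, and that---since $F^{\st[N]}$, $\eq^{\st[N]}$ and $\ineq^{\st[N]}$ are all restrictions of their $\st$-counterparts---both $\home{v}$ and the degree $\val[{\st[N]}, v]{t \ineq t'}$ agree whether computed in $\st[N]$ or in $\st$. Taking $\bigwedge$ over the (smaller) family of valuations into $N$ then gives $\val[{\st[N]}]{t \ineq t'} \geq \val[\st]{t \ineq t'} \geq \Sigma(t \ineq t')$. For a direct product $\st = \dst$ with every $\st_i \in \Mod$, I would put $v_i(x) = v(x)(i)$ and prove by term induction (using the componentwise operations) that $\home{v}(t)(i) = \home{v_i}(t)$; then \eqref{eqn:dirineq} gives $\val{t \ineq t'} = \bigwedge_{i \in I}\val[\st_i, v_i]{t \ineq t'}$, and as each $\val[\st_i, v_i]{t \ineq t'} \geq \Sigma(t \ineq t')$ the meet is $\geq \Sigma(t \ineq t')$ for every $v$, hence so is $\val[\st]{t \ineq t'}$. (The empty product is the trivial algebra, where every degree equals $1$, so it is vacuously a model.)

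\textbf{Homomorphic images.} Here I would take a surjective $h\!: \st \to \st[N]$ with $\st \in \Mod$, fix an arbitrary $\st[N]$-valuation $w$, and use surjectivity to lift it to an $\st$-valuation $v$ with $h \circ v = w$. A term induction on \eqref{eqn:morff} gives $\home{w}(t) = h(\home{v}(t))$, and the homomorphism inequality \eqref{eqn:morf} then yields $\val{t \ineq t'} = \home{v}(t) \ineq^{\st}\home{v}(t') \leq h(\home{v}(t)) \ineq^{\st[N]} h(\home{v}(t')) = \val[{\st[N]}, w]{t \ineq t'}$. Since $\Sigma(t \ineq t') \leq \val[\st]{t \ineq t'} \leq \val{t \ineq t'}$, this gives $\Sigma(t \ineq t') \leq \val[{\st[N]}, w]{t \ineq t'}$ for every $w$, and hence $\Sigma(t \ineq t') \leq \val[{\st[N]}]{t \ineq t'}$.

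The homomorphic-image case is where I expect the only genuinely graded subtlety to lie: homomorphisms are required to satisfy merely the one-sided compatibility \eqref{eqn:morf} rather than an equality, so the hard part will be to confirm that this single direction is exactly what preserves satisfaction---which it is, precisely because $\Sigma$ prescribes \emph{lower bounds} on truth degrees while \eqref{eqn:morf} can only raise $\ineq$-degrees. Everything else---the two term-value induction lemmas and the elementary meet estimates---is standard and purely computational, so in a sketch I would merely state those lemmas and suppress the inductions.
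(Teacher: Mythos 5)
Your proposal is correct and follows essentially the same route as the paper's own (sketched) proof: restriction of valuations for subalgebras, the componentwise identity $\home{v}(t)(i)=\home{v_i}(t)$ together with \eqref{eqn:dirineq} for direct products, and lifting a valuation along a surjective homomorphism and applying \eqref{eqn:morf} for homomorphic images. You merely spell out a bit more detail (e.g., the empty-product case) than the paper, which deliberately suppresses the routine term inductions.
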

\begin{proof}
  The fact that $\Mod$ is closed under formations of subalgebras is immediate
  since $\Sigma(t \ineq t') \leq \val[\st]{t \ineq t'} \leq
  \val[{\st[N]}]{t \ineq t'}$ for each subalgebra $\st[N]$ of $\st \in \Mod$.
  Analogously, if $\st_i \in \Mod$ ($i \in I$), we may use
  $\Sigma(t \ineq t') \leq
  \bigwedge_{i \in I}\val[\st_i]{t \ineq t'} \leq
  \val[\dprod_{i \in I}\st_i]{t \ineq t'}$
  to prove that $\dprod_{i \in I}\st_i \in \Mod$. Indeed, in this case we may
  use the fact that for each $\dprod_{i \in I}\st_i$-valuation $v$ there are
  $\st_i$-valuations $v_i$ ($i \in I$) such that $v_i(x) = (v(x))(i)$ for all
  $x \in X$ and $i \in I$ and thus $\home{v_i}(t) = (\home{v}(t))(i)$
  for all $t \in T(X)$ and $i \in I$, cf.~\cite{BeVy:FEL}.

  Finally, let $\st \in \Mod$, take a surjective homomorphism
  $h\!: \st \to \st[N]$, and consider an $\st[N]$-valuation $v$.
  Since $h$ is surjective, there is an $\st$-valuation $w$ such that
  $v(x) = h(w(x))$ for all $x \in X$,
  i.e., $v = w \circ h$ with $\circ$ defined as
  in Remark~\ref{rem:compos}. By induction on the rank of terms,
  \eqref{eqn:morff} can be used to prove that
  $\home{w} \circ h = \home{(w \circ h)}$. Now, using \eqref{eqn:morf},
  it follows that
  \begin{align*}
    \val[\st,w]{t \ineq t'} &=
    \home{w}(t) \ineq^{\st} \home{w}(t')
    \\
    &\leq
    h(\home{w}(t)) \ineq^{\st[N]} h(\home{w}(t'))
    \\
    &=
    (\home{w} \circ h)(t) \ineq^{\st[N]} (\home{w} \circ h)(t')
    \\
    &=
    \home{(w \circ h)}(t) \ineq^{\st[N]} \home{(w \circ h)}(t')
    \\
    &= 
    \home{v}(t) \ineq^{\st[N]} \home{v}(t') = 
    \val[{\st[N],v}]{t \ineq t'},
  \end{align*}
  showing $\Sigma(t \ineq t') \leq \val[\st]{t \ineq t'} \leq
  \val[{\st[N]}]{t \ineq t'}$. Hence, $\st[N] \in \Mod$.
\end{proof}

Using the standard terminology in general algebra, a class of 
algebras with fuzzy orders is called a \emph{variety} if it is
closed under formations of subalgebras,
homomorphic images, and direct products.
In addition, by $\V(\K)$ we denote the variety generated by $\K$, i.e., 
$\V(\K)$ is the least variety containing each algebra with fuzzy order
in $\K$. The previous theorem states that each inequational class is
a variety. We now turn our attention to the opposite direction of
this observation.

Analogously as in the ordinary case, in order to prove that a variety $\K$
is an inequational classes, we utilize particular algebras in $\K$ which
may be regarded as the greatest images of $\TX$ in $\K$. In particular,
for a class $\K$ of algebras with $\L$-order and for a set $X$ of variables,
we define
\begin{align}
  \fcset{\K} = \se{\qc_h;\, h\!: \TX \to \st \text{ for some } \st \in \K},
  \label{eqn:fcset}
\end{align}
i.e. $\fcset{\K}$ is the set of all compatible $\L$-preorders on $\TX$
induced by homomorphisms from $\TX$ to algebras in $\K$, see~\eqref{eqn:hom_th}.
Using the same arguments as in \cite[Theorem 3.1]{BeVy:Awfs},
the set of all compatible $\L$-preorders on $\TX$ is closed under arbitrary
intersections. Thus, $\fcon{\K} = \smallcap{\fcset{\K}}$ is a compatible
$\mathbf{L}$-preorder on $\TX$. The corresponding factor algebra
\begin{align}
  \falg{\K}=\TX/\fcon{\K}
\end{align}
is called the \emph{$\K$-free algebra with $\L$-order}
(\emph{generated by $X$}).
The following assertions show properties of $\falg{\K}$.

\begin{lemma}
  If $\K$ is variety, then $\falg{\K} \in \K$.
\end{lemma}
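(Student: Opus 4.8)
The plan is to realize the free algebra $\falg{\K}$ as (isomorphic to) a subalgebra of a direct product of factor algebras, each of which lies in $\K$, and then to invoke the closure of the variety $\K$ under subalgebras, homomorphic images, and direct products. First I would verify that every factor algebra indexing the product lies in $\K$. Fix $\qc \in \fcset{\K}$; by definition $\qc = \qc_h$ for some homomorphism $h\colon \TX \to \st$ with $\st \in \K$. The discussion following~\eqref{eqn:hom_th} provides an embedding $g\colon \TX/\qc_h \to \st$, so $\TX/\qc$ is isomorphic to the image $g(\TX/\qc_h)$, which is a subalgebra of $\st$. Since $\K$ is closed under subalgebras and (as isomorphisms are surjective embeddings, hence homomorphic images) under isomorphic copies, I conclude $\TX/\qc \in \K$ for every $\qc \in \fcset{\K}$. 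Here $\fcset{\K}$ is a genuine set, being a subset of the set of all compatible $\L$-preorders on $\TX$, and it is nonempty because $\K$, being closed under the empty direct product, contains the trivial algebra.

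Next I would define a map $e\colon \falg{\K} \to \st[P]$, where $\st[P] = \dprod_{\qc \in \fcset{\K}}(\TX/\qc)$, by setting $e\bigl(\cl[\fcon{\K}]{t}\bigr)(\qc) = \cl[\qc]{t}$ for each $t \in T(X)$ and each $\qc \in \fcset{\K}$, and show it is an embedding. Well-definedness follows from $\fcon{\K} = \smallcap{\fcset{\K}}$: two terms determine the same $\fcon{\K}$-class exactly when $\fcon{\K}(t,s) = \fcon{\K}(s,t) = 1$, which by the infimum formula holds iff $\qc(t,s) = \qc(s,t) = 1$ for every $\qc$, i.e. iff $\cl[\qc]{t} = \cl[\qc]{s}$ in each component. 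Compatibility of $e$ with the operations is immediate from the componentwise definition of the functions on both the factor algebras and the product. The crucial point is that $e$ preserves $\ineq$ \emph{exactly}: by the definition~\eqref{eqn:ineq_fac} of the order on a factor algebra together with the definition~\eqref{eqn:dirineq} of the order on a product as an infimum of its components,
\begin{align*}
  e\bigl(\cl[\fcon{\K}]{t}\bigr) \ineq^{\st[P]}
  e\bigl(\cl[\fcon{\K}]{s}\bigr)
  &= \textstyle\bigwedge_{\qc \in \fcset{\K}}
  \bigl(\cl[\qc]{t} \ineq^{\TX/\qc} \cl[\qc]{s}\bigr)
  = \textstyle\bigwedge_{\qc \in \fcset{\K}} \qc(t,s) \\
  &= \fcon{\K}(t,s)
  = \cl[\fcon{\K}]{t} \ineq^{\falg{\K}} \cl[\fcon{\K}]{s},
\end{align*}
so $e$ satisfies~\eqref{eqn:embed} and is therefore an embedding.

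Finally, since $\K$ is closed under direct products, $\st[P] \in \K$; its subalgebra $e(\falg{\K})$ then lies in $\K$ by closure under subalgebras, and $\falg{\K}$, being isomorphic to this subalgebra, lies in $\K$ by closure under isomorphic copies. I expect the main obstacle to be the verification that $e$ preserves $\ineq$ with equality rather than mere inequality; this is precisely where the definition of $\fcon{\K}$ as the \emph{full} intersection $\smallcap{\fcset{\K}}$, matched against the infimum defining the product order, is indispensable, since any strictly smaller compatible $\L$-preorder would yield only a homomorphism rather than an embedding and the argument would collapse.
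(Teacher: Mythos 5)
Correct, and essentially the paper's own argument: the paper constructs the very same map $\bigl(h\bigl(\cl[\fcon{\K}]{t}\bigr)\bigr)(\qc) = \cl[\qc]{t}$ into $\dprod_{\qc \in \fcset{\K}}\TX/\qc$, checks well-definedness, compatibility with the operations, and the embedding property by the same infimum computation, and then invokes closure of $\K$ under direct products and subalgebras. Your additions---explicitly justifying that each $\TX/\qc$ lies in $\K$ via the embedding of $\TX/\qc_h$ into some $\st \in \K$, and noting that $\fcset{\K}$ is a nonempty set---merely fill in details the paper leaves implicit, so the two proofs coincide in substance.
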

\begin{proof}
  We prove the assertion by showing that $\falg{\K}$ is isomorphic
  to a subalgebra of a direct product of algebras $\TX/\qc$ in $\K$.
  Consider a map $h$ of the form
  $h\!: T(X)/\fcon{\K} \to \dprod_{\qc \in \fcset{\K}}T(X)/\qc$
  such that
  \begin{align}
    \bigl(h\bigl(\cl[\fcon{\K}]{t}\bigr)\bigr)(\qc) = \cl[\qc]{t}
    \label{eqn:falg_in_K}
  \end{align}
  for all $t \in T(X)$ and $\qc \in \fcset{\K}$.
  First, the map is well defined
  since~\eqref{eqn:falg_in_K} does not depend on the choice of $t$
  in $\cl[\fcon{\K}]{t}$. In order to see that, observe that for
  $t' \in \cl[\fcon{\K}]{t}$ we have $(\fcon{\K})(t,t') = (\fcon{\K})(t',t) = 1$
  which means $\qc(t,t') = \qc(t',t) = 1$ for all $\qc \in \fcset{\K}$
  and thus $\cl{t} = \cl{t'}$. Moreover, $h$ is compatible with
  functions: Take an $n$-ary $f$, terms $t_1,\ldots,t_n$ and observe
  that for all $\qc \in \fcset{\K}$, we have
  \begin{align*}
    &\bigl(h\bigl(f^{\falg{\K}}\bigl(\cl[\fcon{\K}]{t_1},\ldots,
    \cl[\fcon{\K}]{t_n}\bigr)\bigr)\bigr)(\qc) \\
    &= \bigl(h\bigl(\cl[\fcon{\K}]{f(t_1,\ldots,t_n)}\bigr)\bigr)(\qc) \\
    &= \cl{f(t_1,\ldots,t_n)} \\
    &= f^{\TX/\qc}\bigl(\cl{t_1},\ldots,\cl{t_n}\bigr) \\
    &= f^{\TX/\qc}\bigl(\bigl(h\bigl(\cl[\fcon{\K}]{t_1}\bigr)\bigr)(\qc),
    \ldots,\bigl(h\bigl(\cl[\fcon{\K}]{t_n}\bigr)\bigr)(\qc)\bigr) \\
    &= \bigl(f^{\dprod_{\qc \in \fcset{\K}}\TX/\qc}
    \bigl(h\bigl(\cl[\fcon{\K}]{t_1}\bigr),
    \ldots,h\bigl(\cl[\fcon{\K}]{t_n}\bigr)\bigr)\bigr)(\qc).
  \end{align*}
  Therefore, $h$ satisfies~\eqref{eqn:morff}. In order to finish the proof,
  it suffices to check~\eqref{eqn:embed}. Using~\eqref{eqn:dirineq}, for
  any $t,t' \in T(X)$ it follows that
  \begin{align*}
    \cl[\fcon{\K}]{t}
    \ineq^{\falg{\K}}
    \cl[\fcon{\K}]{t'}
    &= (\fcon{\K})(t,t') \\
    &= \textstyle\bigwedge_{\qc \in \fcset{\K}} \qc(t,t') \\
    &= \textstyle\bigwedge_{\qc \in \fcset{\K}} 
    \cl{t} \ineq^{\TX/\qc} \cl{t'} \\
    &= \textstyle\bigwedge_{\qc \in \fcset{\K}} 
    \bigl(h\bigl(\cl[\fcon{\K}]{t}\bigr)\bigr)(\qc)
    \ineq^{\TX/\qc}
    \bigl(h\bigl(\cl[\fcon{\K}]{t'}\bigr)\bigr)(\qc) \\
    &= h\bigl(\cl[\fcon{\K}]{t}\bigr)
    \ineq^{\dprod_{\qc \in \fcset{\K}}\TX/\qc}
    h\bigl(\cl[\fcon{\K}]{t'}\bigr),
  \end{align*}
  which proves that $h$ is an embedding.
  Therefore, $\falg{\K} \in \K$.
\end{proof}

The following lemma shows that if $\K$ is a variety,
then the natural homomorphism $h_{\fcon{\K}}\!: \TX \to \falg{\K}$
defined by~\eqref{eqn:nat} is a sur-reflection of $\TX$ in $\K$.
As a consequence, $\falg{\K}$ can be seen as the greatest image
of $\TX$ in $\K$, see~\cite{BeVy:FHLII}.

\begin{lemma}\label{le:surref}
  Let $\K$ be a variety, $\st \in \K$, and let $h\!: \TX \to \st$ be
  a homomorphism. Then, there is a unique homomorphism
  $\rflc{h}\!: \falg{\K} \to \st$ such that $h = h_{\fcon{\K}} \circ \rflc{h}$.
\end{lemma}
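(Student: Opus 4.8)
The plan is to lean entirely on the defining property of $\fcon{\K}$ as the intersection $\smallcap\fcset{\K}$ of all compatible $\L$-preorders on $\TX$ arising from homomorphisms into $\K$. Since $\st \in \K$ and $h\!: \TX \to \st$ is a homomorphism, the induced relation $\qc_h$ from~\eqref{eqn:hom_th} is itself a member of $\fcset{\K}$, whence $\fcon{\K} \subseteq \qc_h$. This one containment is the engine of the whole argument. I would then define the candidate $\rflc{h}$ by $\rflc{h}(\cl[\fcon{\K}]{t}) = h(t)$ for every $t \in T(X)$. Observe that this assignment is in fact forced: in the diagrammatic composition of Remark~\ref{rem:compos}, the required identity $h = h_{\fcon{\K}} \circ \rflc{h}$ reads $\rflc{h}(h_{\fcon{\K}}(t)) = \rflc{h}(\cl[\fcon{\K}]{t}) = h(t)$, so existence and uniqueness are both governed by this single formula.

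The first genuine task is well-definedness. Suppose $\cl[\fcon{\K}]{t} = \cl[\fcon{\K}]{t'}$, which by the construction of the factor algebra means the symmetric interior of $\fcon{\K}$ takes value $1$ on $(t,t')$, i.e. $(\fcon{\K})(t,t') = (\fcon{\K})(t',t) = 1$. The containment $\fcon{\K} \subseteq \qc_h$ then gives $\qc_h(t,t') = \qc_h(t',t) = 1$, that is $h(t) \ineq^{\st} h(t') = h(t') \ineq^{\st} h(t) = 1$; antisymmetry~\eqref{eqn:iAsy} yields $h(t) \eq^{\st} h(t') = 1$, and separability~\eqref{eqn:eRef} forces $h(t) = h(t')$. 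This is the step I expect to carry the weight of the proof, since it is exactly where the containment and the order axioms combine.

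It then remains to check that $\rflc{h}$ is a homomorphism of algebras with $\L$-order and that the triangle commutes. Compatibility with the operations~\eqref{eqn:morff} is a direct computation from $f^{\falg{\K}}(\cl[\fcon{\K}]{t_1},\ldots,\cl[\fcon{\K}]{t_n}) = \cl[\fcon{\K}]{f(t_1,\ldots,t_n)}$ together with the fact that $h$ already respects $f$. Compatibility with the $\L$-order~\eqref{eqn:morf} reduces once more to the key containment, since $\cl[\fcon{\K}]{t} \ineq^{\falg{\K}} \cl[\fcon{\K}]{t'} = (\fcon{\K})(t,t') \leq \qc_h(t,t') = h(t) \ineq^{\st} h(t')$. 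The factorization $h = h_{\fcon{\K}} \circ \rflc{h}$ then holds by the very definition of $\rflc{h}$, as noted above.

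Finally, for uniqueness I would use that $h_{\fcon{\K}}$ is surjective, so every element of $\falg{\K}$ has the form $\cl[\fcon{\K}]{t} = h_{\fcon{\K}}(t)$. Any homomorphism $g\!: \falg{\K} \to \st$ with $h = h_{\fcon{\K}} \circ g$ must then satisfy $g(\cl[\fcon{\K}]{t}) = g(h_{\fcon{\K}}(t)) = h(t) = \rflc{h}(\cl[\fcon{\K}]{t})$, so $g = \rflc{h}$. In summary, the only nontrivial point is well-definedness; everything afterwards is bookkeeping, and the crucial inclusion $\fcon{\K} \subseteq \qc_h$ is immediate from $\fcon{\K}$ being an intersection over a family to which $\qc_h$ belongs.
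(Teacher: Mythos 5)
Your proposal is correct and follows essentially the same route as the paper: define $\rflc{h}(\cl[\fcon{\K}]{t}) = h(t)$, derive well-definedness and order-compatibility from the single inclusion $\fcon{\K} \subseteq \qc_h$ (which holds because $\qc_h \in \fcset{\K}$), and get uniqueness from the surjectivity of $h_{\fcon{\K}}$. The only difference is cosmetic: you spell out the final step of well-definedness via antisymmetry~\eqref{eqn:iAsy} and separability~\eqref{eqn:eRef}, which the paper leaves implicit.
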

\begin{proof}
  Consider $\rflc{h}$ given by
  \begin{align*}
    \rflc{h}\bigl(\cl[\fcon{\K}]{t}\bigr) &= h(t)
  \end{align*}
  for all terms $t \in T(X)$. Note that $\rflc{h}$ is well defined. Indeed,
  for $t' \in \cl[\fcon{\K}]{t}$,
  we have $\fcon{\K}(t,t') = \fcon{\K}(t',t) = 1$.
  Since $\qc_h \in \fcset{\K}$, we get
  $\qc_h(t,t') = \qc_h(t',t) = 1$ which yields
  $h(t) \ineq^{\st} h(t') = h(t') \ineq^{\st} h(t) = 1$
  and so
  $h(t) = h(t')$, i.e., the values of $\rflc{h}$ do not depend on the
  terms selected from the classes $\cl[\fcon{\K}]{{\cdots}}$.
  Obviously, $h = h_{\fcon{\K}} \circ \rflc{h}$. We show that $\rflc{h}$
  is a homomorphism.
  For any $n$-ary $f$ and $t_1,\ldots,t_n \in T(X)$, we have
  \begin{align*}
    \rflc{h}\bigl(f^{\falg{\K}}\bigl(\cl[\fcon{\K}]{t_1},\ldots,
    \cl[\fcon{\K}]{t_n}\bigr)\bigr) &=
    \rflc{h}\bigl(\cl[\fcon{\K}]{f(t_1,\ldots,t_n)}\bigr) \\
    &= h(f(t_1,\ldots,t_n)) \\
    &= f^{\st}(h(t_1),\ldots,h(t_n)) \\
    &= f^{\st}\bigl(\rflc{h}\bigl(\cl[\fcon{\K}]{t_1}\bigr),\ldots,
    \rflc{h}\bigl(\cl[\fcon{\K}]{t_n}\bigr)\bigr).
  \end{align*}
  It remains to show that $\rflc{h}$ is compatible with the $\L$-orders
  in $\falg{\K}$ and $\st$. By definition of $\rflc{h}$, it suffices to
  prove
  \begin{align*}
    (\fcon{\K})(t,t') =
    \cl[\fcon{\K}]{t} \ineq^{\falg{\K}} \cl[\fcon{\K}]{t'} \leq
    h(t) \ineq^{\st} h(t') = 
    \qc_h(t,t')
  \end{align*}
  which is indeed the case because $\qc_h \in \fcset{\K}$. Note that
  since $h_{\fcon{\K}}$ is surjective, $\rflc{h}$ is determined uniquely.
\end{proof}

The following lemma is a consequence of the previous observations and shows
that each variety $\K$ of algebras with $\L$-order is generated by
a single $\K$-free algebra with $\L$-order.
Namely, it is sufficient to consider $\falg{\K}$
generated by a denumerable set $X$ of variables.

\begin{lemma}\label{le:V_falg}
  If $\K$ is variety, then $\K = \V(\{\falg{\K}\})$ where $X$ is denumerable.
\end{lemma}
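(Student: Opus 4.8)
The plan is to prove the two inclusions $\K \subseteq \V(\{\falg{\K}\})$ and $\V(\{\falg{\K}\}) \subseteq \K$ separately. The second inclusion is the easier one: since $\K$ is a variety, it is closed under $\H$, $\S$, and $\PP$; and by the previous lemma $\falg{\K} \in \K$, so every algebra obtainable from $\falg{\K}$ by these operations already lies in $\K$. Because $\V(\{\falg{\K}\})$ is by definition the least variety containing $\falg{\K}$, and $\K$ is a variety containing $\falg{\K}$, we immediately get $\V(\{\falg{\K}\}) \subseteq \K$.

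The heart of the argument is the first inclusion, $\K \subseteq \V(\{\falg{\K}\})$. First I would take an arbitrary $\st \in \K$ and exhibit it as a homomorphic image of a subalgebra of a (possibly large) power of $\falg{\K}$, thereby placing it in $\V(\{\falg{\K}\})$. The standard device is to choose $X$ large enough — here denumerable suffices after one reduces to the relevant term arguments — so that there is a surjective $\st$-valuation, i.e.\ a map $v\colon X \to M$ whose homomorphic extension $\home{v}\colon \TX \to \st$ is onto. Concretely, since we only need to generate the elements of $\st$ that actually appear, and a denumerable $X$ can be mapped onto any subset needed by enumerating a generating family, I would arrange that $\home{v}$ is a surjective homomorphism $\TX \to \st$.

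Given such a surjective $h = \home{v}\colon \TX \to \st$ with $\st \in \K$, Lemma~\ref{le:surref} supplies a unique homomorphism $\rflc{h}\colon \falg{\K} \to \st$ with $h = h_{\fcon{\K}} \circ \rflc{h}$. Since $h$ is surjective and $h = h_{\fcon{\K}} \circ \rflc{h}$, the map $\rflc{h}$ must itself be surjective, so $\st$ is a homomorphic image of $\falg{\K}$, i.e.\ $\st \in \H(\{\falg{\K}\}) \subseteq \V(\{\falg{\K}\})$. The main obstacle is the surjectivity bookkeeping: one must justify that a \emph{denumerable} $X$ is genuinely enough to hit every element of an arbitrary $\st \in \K$. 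This is the familiar subtlety — a single fixed denumerable free algebra cannot be surjected onto algebras of arbitrarily large cardinality. The resolution, exactly as in the classical Bloom/Birkhoff setting, is that an algebra failing to be a denumerably generated quotient is nonetheless a \emph{subalgebra of a product} of such quotients: for each finite (hence denumerably indexed) tuple of elements one builds an appropriate homomorphism from $\TX$ into the relevant finitely generated subalgebra, and these assemble, via $\S$ and $\PP$, into a representation of $\st$ inside $\V(\{\falg{\K}\})$.

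Putting the pieces together yields $\K \subseteq \V(\{\falg{\K}\})$ and hence, with the reverse inclusion above, $\K = \V(\{\falg{\K}\})$. I would expect the write-up to lean on Lemma~\ref{le:surref} for the universal property and on the closure of $\K$ under $\H\S\PP$ for the trivial direction, with the only real care needed in the cardinality/generation step just described.
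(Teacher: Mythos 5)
Your skeleton coincides with the paper's: the inclusion $\V(\{\falg{\K}\}) \subseteq \K$ is immediate from $\falg{\K} \in \K$ and the closure of $\K$ under $\H$, $\S$, $\PP$; and for the converse you correctly spot the cardinality obstruction and the remedy of passing through the finitely generated subalgebras $\gen{N}$ of a given $\st \in \K$, each of which is a surjective image of $\falg{\K}$ by Lemma~\ref{le:surref} applied to a surjective homomorphism $\TX \to \gen{N}$ (this is exactly the paper's first step).

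The gap is in the reassembly step, which is where essentially all the work in the paper's proof lies. Your one concrete claim there --- that $\st$ is a \emph{subalgebra of a product} of such quotients, i.e., lies in $\S\PP$ of its finitely generated subalgebras --- is false in general: the canonical maps run $\gen{N} \to \st$, not the other way, and there need be no jointly injective (let alone order-reflecting) family of homomorphisms from an arbitrary $\st$ into countably generated algebras, so the assembly cannot be done ``via $\S$ and $\PP$'' alone. What is true, and what the paper proves, is that $\st$ embeds into a \emph{homomorphic image} of a subalgebra of $\prod_{N \in \mathcal{M}}\gen{N}$: one takes the subalgebra $\st[P]$ of threads that stabilize to a fixed value of $M$ on an up-set of the directed index set $\mathcal{M}$ of finite subsets of $M$, defines a compatible $\L$-preorder $\qc$ on $\st[P]$ by $\qc(\e{a},\e{b}) = \e{m} \ineq^{\st} \e{n}$ for the eventual values $\e{m}$, $\e{n}$, and embeds $\st$ into $\st[P]/\qc$. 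So $\H$ is indispensable in the assembly, and in the fuzzy setting one must additionally verify that $\qc$ is well defined, $\otimes$-transitive and compatible with the operations, and that the resulting map preserves $\ineq^{\st}$ exactly so that it is an embedding of algebras with $\L$-order. Without this construction (or an equivalent directed-colimit argument carried out inside $\H\S\PP$), your proof of $\K \subseteq \V(\{\falg{\K}\})$ is incomplete.
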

\begin{proof}
  Take $\st \in \K$ and let $\gen{N}$ denote the subalgebra of $\st$ which
  is generated by $N \subseteq M$. We first show that each finitely generated
  subalgebra of $\st$ is an image of $\falg{\K}$ where $X$ is denumerable.

  Let $N \subseteq M$ be finite and take any surjective map $h\!: X \to N$.
  The map can be extended to a surjective homomorphism
  $\home{h}\!: \TX \to \gen{N}$. By Lemma~\ref{le:surref},
  there is a homomorphism $\rflc{\home{h}}\!: \falg{\K} \to \gen{N}$
  such that $\home{h} = h_{\fcon{\K}} \circ \rflc{\home{h}}$.
  In addition, the surjectivity of $\rflc{\home{h}}$ follows directly by
  the surjectivity of $\home{h}$.
  Hence, $\gen{N} \in \V(\{\falg{\K}\})$.

  We continue the proof by showing that $\st$ can be embedded into
  a factorization of a subalgebra of a direct product
  of its finitely generated subalgebras.
  In the proof, we use the following notation: We put
  \begin{align*}
    \mathcal{M} = \{N \ne \emptyset;\,
    N \subseteq M \text{ and } N \text{ is finite}\}
  \end{align*}
  and consider the direct product
  $\textstyle\dprod_{N \in \mathcal{M}}\gen{N}$.
  For any $\e{a} \in \textstyle\dprod_{N \in \mathcal{M}}\gen{N}$,
  we write $\stab{a} = \e{m}$ if the following condition is satisfied:
  $N \in \mathcal{M}$ and for each $N' \in \mathcal{M}$ such that
  $N \subseteq N'$, we have $\e{a}(N') = \e{m}$. Now, let
  \begin{align*}
    P = \{\e{a} \in \textstyle\dprod_{N \in \mathcal{M}}\gen{N};\,
    \stab{a} = \e{m} 
    \text{ for some } N \in \mathcal{M} \text{ and } \e{m} \in M\}.
  \end{align*}
  Clearly, $P$ is a non-empty subuniverse of the direct product of all
  finitely generated subalgebras of $\st$. Furthermore, we may introduce
  a binary $\L$-relation $\qc$ on $\st[P]$ as follows:
  \begin{align*}
    \qc(\e{a},\e{b}) &= \e{m} \ineq^{\st} \e{n}
  \end{align*}
  where $\e{m},\e{n} \in M$ such that
  $\stab{\e{a}} = \e{m}$ and $\stab{\e{b}} = \e{n}$
  for some $N \in \mathcal{M}$.
  Observe that such $N$ always exists and $\e{m},\e{n} \in M$
  are given uniquely. Thus, $\qc$ is well defined.
  In addition to that, $\qc$ is a compatible $\L$-preorder on $\st[P]$:
  Reflexivity of $\qc$ is obvious;
  in order to see that $\qc$ is $\otimes$-transitive,
  consider $\e{a},\e{b},\e{c} \in P$ and let 
  $\qc(\e{a},\e{b}) = \e{m} \ineq^{\st} \e{n}$ for
  $\stab{\e{a}} = \e{m}$ and $\stab{\e{b}} = \e{n}$ and
  $\qc(\e{b},\e{c}) = \e{p} \ineq^{\st} \e{q}$ for
  $\stab[N']{\e{b}} = \e{p}$ and $\stab[N']{\e{c}} = \e{q}$.
  Thus, for $N \cup N'$, we have $\stab[N \cup N']{\e{a}} = \e{m}$,
  $\stab[N \cup N']{\e{c}} = \e{q}$, and 
  $\stab[N \cup N']{\e{b}} = \e{n} = \e{p}$.
  Hence, $\qc(\e{a},\e{b}) \otimes \qc(\e{b},\e{c}) =
  \e{m} \ineq^{\st} \e{n} \otimes \e{n} \ineq^{\st} \e{q} \leq
  \e{m} \ineq^{\st} \e{q} = \qc(\e{a},\e{c})$. Analogously one
  may check that $\qc$ is compatible with all functions in $\st[P]$.
  Moreover, $\ineq^{\st[P]} \subseteq \qc$ follows directly
  by the fact that $\st[P]$ is a subalgebra of 
  $\textstyle\dprod_{N \in \mathcal{M}}\gen{N}$.

  Finally, consider $h\!: M \to P/\qc$ such that
  $h(\e{m}) = \cl{\e{a}}$ for $\stab{\e{a}} = \e{m}$. Clearly,
  $h$ is well defined since for $\stab[N']{\e{b}} = \e{m}$, we have
  $\qc(\e{a},\e{b}) = \qc(\e{b},\e{a}) = 1$
  and thus $\cl{\e{a}} = \cl{\e{b}}$.
  Now, take an $n$-ary function $f^{\st}$
  and $\e{m}_1,\ldots,\e{m}_n \in M$.
  We have
  \begin{align*}
    f^{\st[P]/\qc}\bigl(h(\e{m}_1),\ldots,h(\e{m}_n)\bigr) &= 
    f^{\st[P]/\qc}\bigl(\cl{\e{a}_1},\ldots,\cl{\e{a}_n}\bigr) =
    \cl{f^{\st[P]}(\e{a}_1,\ldots,\e{a}_n)},
  \end{align*}
  where $\stab[N_1]{a}_1 = \e{m}_1,\ldots,\stab[N_n]{a}_n = \e{m}_n$.
  Observe that for $\e{a} = f^{\st[P]}(\e{a}_1,\ldots,\e{a}_n)$
  we have $\stab[N_1 \cup \cdots \cup N_n]{a} =
  f^{\st}(\e{m}_1,\ldots,\e{m}_n)$ and thus
  \begin{align*}
    f^{\st[P]/\qc}\bigl(h(\e{m}_1),\ldots,h(\e{m}_n)\bigr) &= 
    \cl{f^{\st[P]}(\e{a}_1,\ldots,\e{a}_n)} =
    h\bigl(f^{\st}(\e{m}_1,\ldots,\e{m}_n)\bigr).
  \end{align*}
  We now show that $h$ is an embedding. Take $\e{m},\e{n} \in M$
  and consider $\e{a},\e{b} \in P$ such that $\stab{\e{a}} = \e{m}$ and
  $\stab[N']{\e{b}} = \e{n}$, respectively. Since for $N \cup N'$ we have
  $\stab[N \cup N']{\e{a}} = \e{m}$ and $\stab[N \cup N']{\e{b}} = \e{n}$,
  it follows that
  \begin{align*}
    \e{m} \ineq^{\st} \e{n} &= \qc(\e{a},\e{b}) =
    \cl{\e{a}} \ineq^{\st[P]/\qc} \cl{\e{b}} =
    h(\e{m}) \ineq^{\st[P]} h(\e{n}).
  \end{align*}
  Hence, the map $h\!: \st \to \st[P]/\qc$ defined as above is an embedding.
  As a consequence, we have $\st \in \V(\{\gen{N};\, N \in \mathcal{M}\})
  \subseteq \V(\{\falg{\K}\})$.
\end{proof}

An important property of $\K$-free algebras with $\L$-order is that they
satisfy exactly all inequalities which are satisfied by all algebras in
the variety $\K$. This pertains not only to inequalities which are satisfied
fully (i.e., to degree $1$) but to all inequalities which may be satisfied
to arbitrary degrees in $\L$. Indeed, as a consequence of the following lemma,
we obtain that $\val[\K]{t \ineq t'} = \val[\falg{\K}]{t \ineq t'}$
for all $t,t' \in T(X)$.

\begin{lemma}
  Let $\K$ be a variety. For any $t,t' \in T(X)$, we have
  \begin{align}
    \val[\K]{t \ineq t'} = (\fcon{\K})(t,t').
    \label{eqn:K_fcon}
  \end{align}
\end{lemma}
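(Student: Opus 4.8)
The plan is to rewrite both sides of~\eqref{eqn:K_fcon} as infima over a common index set by exploiting the correspondence between valuations and homomorphisms out of the term algebra. Unwinding the definitions~\eqref{eqn:valK}, \eqref{eqn:valM}, \eqref{eqn:valMv}, the left-hand side is
\begin{align*}
  \val[\K]{t \ineq t'} =
  \textstyle\bigwedge_{\st \in \K} \textstyle\bigwedge_{v\!: X \to M}
  \home{v}(t) \ineq^{\st} \home{v}(t'),
\end{align*}
whereas the right-hand side equals $\bigwedge_{\qc \in \fcset{\K}}\qc(t,t')$ by the definition $\fcon{\K} = \smallcap{\fcset{\K}}$. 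It therefore suffices, for each fixed $\st \in \K$, to match the inner meet over $\st$-valuations with the meet of $\qc_h(t,t')$ over all homomorphisms $h\!: \TX \to \st$.

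The key step is to observe that $\st$-valuations $v\!: X \to M$ are in bijective correspondence with homomorphisms $h\!: \TX \to \st$ of algebras with $\L$-order. First I would note that the unique homomorphic extension $\home{v}\!: \TX \to \st$ of any valuation is automatically such a homomorphism: condition~\eqref{eqn:morff} holds by construction of $\home{v}$, and condition~\eqref{eqn:morf} holds because $\ineq^{\TX}$ is the identity $\L$-relation on $T(X)$ (see Example~\ref{ex:triv_cart}). Indeed, for $t = t'$ the inequality $t \ineq^{\TX} t' = 1 \leq \home{v}(t) \ineq^{\st} \home{v}(t)$ is just reflexivity~\eqref{eqn:iRef} of $\ineq^{\st}$, while for $t \ne t'$ it reads $0 \leq \home{v}(t) \ineq^{\st} \home{v}(t')$ and is trivial. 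Conversely, every homomorphism $h\!: \TX \to \st$ restricts on $X$ to a valuation whose homomorphic extension is again $h$ by uniqueness, so $v \mapsto \home{v}$ is a bijection. Under it, $\home{v}(t) \ineq^{\st} \home{v}(t') = \qc_{\home{v}}(t,t')$ by the definition~\eqref{eqn:hom_th} of $\qc_h$.

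Consequently, for fixed $\st \in \K$,
\begin{align*}
  \val[\st]{t \ineq t'} =
  \textstyle\bigwedge_{v\!: X \to M} \home{v}(t) \ineq^{\st} \home{v}(t') =
  \textstyle\bigwedge_{h\!: \TX \to \st} \qc_h(t,t').
\end{align*}
Taking the meet over all $\st \in \K$ and invoking the definition~\eqref{eqn:fcset} of $\fcset{\K}$ as exactly the collection of all $\qc_h$ with $h\!: \TX \to \st$ for some $\st \in \K$, the iterated meet collapses to $\bigwedge_{\qc \in \fcset{\K}}\qc(t,t') = (\fcon{\K})(t,t')$, which is~\eqref{eqn:K_fcon}. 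Here I would simply appeal to the idempotency and associativity of meets in the complete lattice $\L$, so that indexing by homomorphisms (with possibly repeated induced preorders $\qc_h$) yields the same value as indexing by the set $\fcset{\K}$ of distinct preorders.

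I expect the only genuine subtlety — and thus the step meriting the most care — to be the claim that every homomorphic extension $\home{v}$ qualifies as a homomorphism of algebras with $\L$-order, i.e.\ that the \emph{discreteness} of $\ineq^{\TX}$ renders~\eqref{eqn:morf} vacuous. Once this identification of $\st$-valuations with homomorphisms out of $\TX$ is secured, the rest is a purely formal rearrangement of infima with no further inequalities to verify.
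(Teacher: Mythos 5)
Your proof is correct. It coincides with the paper's argument on the $\geq$-direction of~\eqref{eqn:K_fcon} --- both take an arbitrary $\st \in \K$ and $\st$-valuation $v$, note that $\val{t \ineq t'} = \qc_{\home{v}}(t,t')$ with $\qc_{\home{v}} \in \fcset{\K}$, and conclude $\fcon{\K} \subseteq \qc_{\home{v}}$ --- but it handles the $\leq$-direction by a genuinely different route. The paper obtains $\leq$ by invoking the earlier lemma that $\falg{\K} \in \K$, so that $\val[\K]{t \ineq t'} \leq \val[\falg{\K}]{t \ineq t'}$, and the canonical valuation $x \mapsto \cl[\fcon{\K}]{x}$ then witnesses the bound $(\fcon{\K})(t,t')$; this is exactly where the hypothesis that $\K$ is a variety enters. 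You instead observe that every homomorphism $h\!: \TX \to \st$ is the extension $\home{v}$ of its restriction $v = h|_X$ (the order condition~\eqref{eqn:morf} being vacuous because $\ineq^{\TX}$ is the identity $\L$-relation), so that the meet defining $\val[\K]{t \ineq t'}$ and the meet defining $(\fcon{\K})(t,t')$ range over literally the same family of degrees, and the identity becomes a formal rearrangement of infima. What your route buys is generality and independence: the equality holds for an \emph{arbitrary} class $\K$, with no variety hypothesis and no appeal to the preceding lemmas on $\falg{\K}$. What the paper's route buys is brevity given what has already been established, and it keeps the free algebra $\falg{\K}$ --- the object actually exploited in Theorem~\ref{thm:var_ineq} --- rather than the bare preorder $\fcon{\K}$ in the foreground. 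You correctly isolate the one point needing care, namely the bijection between valuations and homomorphisms out of $\TX$, and your justification of it is sound.
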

\begin{proof}
  Observe that the $\leq$-part of~\eqref{eqn:K_fcon} follows directly
  by the fact that $\falg{\K} \in \K$. In order to prove the $\geq$-part,
  take $\st \in \K$ and an $\st$-valuation $v\!: X \to M$. The valuation admits
  a homomorphic extension $\home{v}\!: \TX \to \st$ for which
  $\val{t \ineq t'} = \home{v}(t) \ineq^{\st} \home{v}(t') =
  \qc_{\home{v}}(t,t')$. Since 
  $\qc_{\home{v}} \in \fcset{\K}$, we get 
  $\fcon{\K} \subseteq \qc_{\home{v}}$ and thus
  $\val{t \ineq t'} \geq (\fcon{\K})(t,t')$,
  proving the $\geq$-part of~\eqref{eqn:K_fcon}.
\end{proof}

The following assertion shows that each variety of algebras with fuzzy
orders is an inequational class of
a theory over a denumerable set of variables. The assertion is a direct
analogy of the Bloom variety theorem~\cite{Bloom} of ordered algebras.

\begin{theorem}\label{thm:var_ineq}
  If $\K$ is a variety, then there is $\Sigma$ such that $\K = \Mod$.
\end{theorem}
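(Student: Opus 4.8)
The plan is to take $X$ denumerable and let $\Sigma$ record, for each inequality, the degree to which it holds throughout $\K$: put $\Sigma(t \ineq t') = \val[\K]{t \ineq t'}$ for all $t,t' \in T(X)$. By the preceding lemma this equals $(\fcon{\K})(t,t')$, and it is this identification with the defining $\L$-preorder of the free algebra $\falg{\K}$ that will drive the argument; I take $X$ denumerable so that every denumerably generated algebra arises as a quotient of $\TX$. The inclusion $\K \subseteq \Mod$ is then immediate: for $\st \in \K$ and any $t,t'$, the value $\val[\K]{t \ineq t'}$ is an infimum over $\K$, so $\Sigma(t \ineq t') = \val[\K]{t \ineq t'} \leq \val[\st]{t \ineq t'}$, witnessing $\st \in \Mod$.

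For the converse $\Mod \subseteq \K$ I would first treat a $\st \in \Mod$ that is denumerably generated. Choosing a surjection $v\!: X \to M$ onto a generating set yields a surjective homomorphism $\home{v}\!: \TX \to \st$ whose induced $\L$-preorder satisfies $\qc_{\home{v}}(t,t') = \val{t \ineq t'} \geq \val[\st]{t \ineq t'} \geq \Sigma(t \ineq t') = (\fcon{\K})(t,t')$, so that $\fcon{\K} \subseteq \qc_{\home{v}}$. Consequently, whenever $\cl[\fcon{\K}]{t} = \cl[\fcon{\K}]{t'}$ we have $\home{v}(t) \ineq^{\st} \home{v}(t') = \home{v}(t') \ineq^{\st} \home{v}(t) = 1$, hence $\home{v}(t) \eq^{\st} \home{v}(t') = 1$ and, by separability, $\home{v}(t) = \home{v}(t')$. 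Thus $g(\cl[\fcon{\K}]{t}) = \home{v}(t)$ is a well-defined surjective map; it respects the functions by construction and respects the $\L$-orders because $\cl[\fcon{\K}]{t} \ineq^{\falg{\K}} \cl[\fcon{\K}]{t'} = (\fcon{\K})(t,t') \leq \qc_{\home{v}}(t,t') = \home{v}(t) \ineq^{\st} \home{v}(t')$. Hence $g\!: \falg{\K} \to \st$ is a surjective homomorphism, exhibiting $\st$ as a homomorphic image of $\falg{\K} \in \K$, so $\st \in \K$.

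Finally, for an arbitrary $\st \in \Mod$ I would reduce to the denumerably generated case exactly as in Lemma~\ref{le:V_falg}. Since $\Mod$ is closed under subalgebras (Theorem~\ref{thm:ineq_var}), each finitely generated subalgebra $\gen{N}$ of $\st$ is again a model of $\Sigma$ and, being denumerably generated, lies in $\K$ by the previous paragraph. The embedding construction from the proof of Lemma~\ref{le:V_falg} then embeds $\st$ into a factor algebra $\st[P]/\qc$ of a subalgebra $\st[P]$ of $\dprod_{N \in \mathcal{M}}\gen{N}$, whence $\st \in \V(\{\gen{N};\, N \in \mathcal{M}\}) \subseteq \K$ because $\K$ is a variety containing every $\gen{N}$.

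I expect this last reduction to be the main obstacle: it depends on combining the subalgebra-closure of $\Mod$ with the product-and-factor embedding already verified for Lemma~\ref{le:V_falg}, and care is needed to see that the finitely generated subalgebras of a model are themselves models and hence fall under the denumerably generated case. By contrast, the denumerably generated case itself is a routine factorization of the surjection $\home{v}$ through $\falg{\K}$, relying only on $\fcon{\K} \subseteq \qc_{\home{v}}$ and the antisymmetry built into $\eq^{\st} \mathop{=} \ineq^{\st} \cap \iineq^{\st}$.
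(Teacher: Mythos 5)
Your proof is correct, and it chooses the same theory $\Sigma(t \ineq t') = \val[\K]{t \ineq t'}$ over a denumerable $X$ and the same pivotal object $\falg{\K}$, but it decomposes the converse inclusion $\Mod \subseteq \K$ differently. The paper notes that $\Mod$ is itself a variety (Theorem~\ref{thm:ineq_var}), that $\val[\K]{t \ineq t'} = \val[\Mod]{t \ineq t'}$, hence $\fcon{\K} = \fcon{\Mod}$ and $\falg{\K}$ coincides with $\falg{\Mod}$, and then applies Lemma~\ref{le:V_falg} to \emph{both} classes to conclude $\K = \V(\{\falg{\K}\}) = \V(\{\falg{\Mod}\}) = \Mod$. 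You instead show directly that every denumerably generated $\st \in \Mod$ is a homomorphic image of $\falg{\K}$ --- in effect re-proving Lemma~\ref{le:surref} for models rather than for members of $\K$, with the key containment $\fcon{\K} \subseteq \qc_{\home{v}}$ now supplied by the model condition $\Sigma(t \ineq t') \leq \val[\st]{t \ineq t'}$ together with \eqref{eqn:K_fcon} rather than by $\qc_{\home{v}} \in \fcset{\K}$ --- and you then reduce an arbitrary model to its finitely generated subalgebras by reusing the $\st[P]/\qc$ construction from Lemma~\ref{le:V_falg} and the subalgebra-closure of $\Mod$. The paper's route is shorter because the whole reduction comes for free from a second citation of Lemma~\ref{le:V_falg} (applied to the variety $\Mod$); yours is more explicit and in particular fills in the ``standard arguments'' the paper only gestures at, at the cost of repeating the factorization machinery. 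Both arguments are sound and rest on the same two supporting facts, namely $\falg{\K} \in \K$ and the embedding of an algebra into a factor of a subalgebra of the product of its finitely generated subalgebras.
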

\begin{proof}
  Let $X$ be a denumerable set of variables and put $\Sigma = \IneqX{X}$
  where $(\IneqX{X})(t \ineq t') = \val[\K]{t \ineq t'}$
  for all $t,t' \in T(X)$.
  By standard arguments, $\K \subseteq \Mod[\IneqX{X}] = \Mod$ and
  \begin{align*}
    \IneqX{X} = \IneqX[{\Mod[\IneqX{X}]}]{X} = \IneqX[\Mod]{X},
  \end{align*}
  i.e., $\val[\K]{t \ineq t'} = \val[\Mod]{t \ineq t'}$
  for all $t,t' \in T(X)$. Therefore, using~\eqref{eqn:K_fcon},
  we get $\fcon{\K} = \fcon{\Mod}$, i.e.,
  $\falg{\K}$ coincides with $\falg{\Mod}$.
  Hence, Lemma~\ref{le:V_falg} yields $\K = \Mod$.
\end{proof}

As a consequence of the previous observations, we get that varieties
are exactly inequational classes:

\begin{corollary}
  A class $\K$ of algebras with $\L$-orders is a variety if{}f
  $\K$ is an inequational class.
  \qed
\end{corollary}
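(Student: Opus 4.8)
The plan is to recognize this corollary as nothing more than the conjunction of the two main results of the section, and so to prove it by citing each implication in turn without introducing any new construction. First I would recall the relevant definitions: a \emph{variety} is a class of algebras with $\L$-order closed under the formations of subalgebras, homomorphic images, and direct products, whereas an \emph{inequational class} is precisely a class of the form $\Mod$ for some $\L$-set $\Sigma$ of inequalities.

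For the implication that an inequational class is a variety, I would take $\K$ with $\K = \Mod$ for a suitable $\Sigma$ and invoke Theorem~\ref{thm:ineq_var}, which asserts exactly that $\Mod$ is closed under subalgebras, homomorphic images, and direct products; this is the defining property of a variety, so $\K$ is a variety. For the converse, starting from an arbitrary variety $\K$, Theorem~\ref{thm:var_ineq} supplies an $\L$-set $\Sigma$ of inequalities (over a denumerable set of variables) with $\K = \Mod$, which is by definition the assertion that $\K$ is an inequational class.

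I expect no real obstacle, since all the substantive work has already been carried out: the forward direction is the closure computation of Theorem~\ref{thm:ineq_var}, and the more delicate converse rests on the free-algebra machinery developed through Lemma~\ref{le:V_falg} and Theorem~\ref{thm:var_ineq}. The only point that genuinely needs checking is that the closure conditions named in the definition of \emph{variety} coincide verbatim with those established in Theorem~\ref{thm:ineq_var}, which they do.
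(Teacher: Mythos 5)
Your proposal is correct and matches the paper's intent exactly: the corollary is stated with \qed precisely because it is the immediate conjunction of Theorem~\ref{thm:ineq_var} (inequational classes are varieties) and Theorem~\ref{thm:var_ineq} (varieties are inequational classes), with no further argument needed. Your one caveat --- checking that the closure conditions in the definition of variety coincide with those in Theorem~\ref{thm:ineq_var} --- is indeed the only thing to verify, and it holds verbatim.
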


In the rest of this paper, we comment of further closure properties
of inequational classes defined by theories using only particular degrees
in $\L$. The comments are inspired by an approach to algebras in
fuzzy setting proposed in~\cite{Haj:AnBvtfl} and which later appeared
in~\cite{CiHa:Tnbpfl}. The major difference between \cite{Haj:AnBvtfl}
and the Pavelka-style approaches (including the approach in the present paper)
is that \cite{Haj:AnBvtfl} allows to consider algebras with fuzzy equalities
using all BL-algebras taken as structures of truth degrees at the same time.
As a consequence, the approach uses equational theories in the usual sense
(ordinary sets of identities) and exploits the fact that varieties of
algebras with fuzzy equalities can be put in correspondence with varieties
of ordinary algebras by means of skeletons and homomorphic images.

Analogous characterization as in~\cite{Haj:AnBvtfl} can also be established
in our case with a fixed $\L$ as the structure of degrees. Indeed, for
each algebra $\st = \loalg$ with $\L$-order, let $\ske{\st}$ denote the
algebra with $\L$-order defined on $M$ with the same functions as in $\st$,
and with $\ineq^{\ske{\st}}$ being the identity $\L$-relation.
Note that each $\st$ is a homomorphic image of $\ske{\st}$ which
is called the \emph{skeleton} of $\st$. A theory $\Sigma$ is \emph{crisp}
if $\Sigma(t \ineq t') \in \{0,1\}$ for all $t,t' \in T(X)$.
We may now establish the following characterization.

\begin{theorem}\label{th:var_crisp}
  A class $\K$ of algebras with $\L$-orders
  is a variety closed under formations of sekeletons if{}f
  $\K = \Mod$ for crisp $\Sigma$.
\end{theorem}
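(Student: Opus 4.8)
The plan is to derive both directions from the two theorems already proved: Theorem~\ref{thm:ineq_var}, which guarantees that every class of the form $\Mod$ is a variety, and Theorem~\ref{thm:var_ineq}, which represents an arbitrary variety $\K$ as $\Mod$ for $\Sigma = \IneqX{X}$ with $(\IneqX{X})(t \ineq t') = \val[\K]{t \ineq t'}$ over a denumerable $X$. Everything then rests on one observation about the skeleton, which I would record first: since $\ineq^{\ske{\st}}$ is two-valued with $\e{a} \ineq^{\ske{\st}} \e{b} = 1$ exactly when $\e{a} \ineq^{\st} \e{b} = 1$, and since $\st$ and $\ske{\st}$ carry the same functions (so that $\home{v}(t)$ denotes the same element in both), the degree $\val[\ske{\st}]{t \ineq t'}$ lies in $\{0,1\}$ and equals $1$ if and only if $\val[\st]{t \ineq t'} = 1$. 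The bound $\val[\ske{\st}]{t \ineq t'} \leq \val[\st]{t \ineq t'}$ is in any case forced by the fact, noted above, that $\st$ is a homomorphic image of $\ske{\st}$, using the monotonicity exploited in the proof of Theorem~\ref{thm:ineq_var}.

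For the implication from right to left, assume $\K = \Mod$ with $\Sigma$ crisp. By Theorem~\ref{thm:ineq_var} the class $\K$ is already a variety, so I only need closure under skeletons. Given $\st \in \K$, I would check $\Sigma(t \ineq t') \leq \val[\ske{\st}]{t \ineq t'}$ for all terms $t, t'$. If $\Sigma(t \ineq t') = 0$ there is nothing to prove; if $\Sigma(t \ineq t') = 1$, then $\st \in \Mod$ gives $\val[\st]{t \ineq t'} = 1$, and the observation yields $\val[\ske{\st}]{t \ineq t'} = 1$. Hence $\ske{\st} \in \Mod = \K$.

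For the converse, assume $\K$ is a variety closed under skeletons. Theorem~\ref{thm:var_ineq} gives $\K = \Mod$ for $\Sigma = \IneqX{X}$ with $(\IneqX{X})(t \ineq t') = \val[\K]{t \ineq t'}$, so it suffices to prove that this $\Sigma$ is crisp, i.e.\ that $\val[\K]{t \ineq t'} \in \{0,1\}$ for every $t, t'$. Fix $t, t'$. If $\val[\st]{t \ineq t'} = 1$ for all $\st \in \K$, then the meet $\val[\K]{t \ineq t'}$ equals $1$. Otherwise choose $\st_0 \in \K$ with $\val[\st_0]{t \ineq t'} \neq 1$; the observation gives $\val[\ske{\st_0}]{t \ineq t'} = 0$, and because $\ske{\st_0} \in \K$ by closure under skeletons, the meet defining $\val[\K]{t \ineq t'}$ collapses to $0$.

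I expect the only real work to lie in the observation, everything else being a bookkeeping application of the two cited theorems. Establishing it means showing that passing to the skeleton preserves exactly the fully satisfied inequalities: the inequality $\val[\ske{\st}]{t \ineq t'} \leq \val[\st]{t \ineq t'}$ is immediate from $\st$ being a homomorphic image of $\ske{\st}$, while the implication $\val[\st]{t \ineq t'} = 1 \Rightarrow \val[\ske{\st}]{t \ineq t'} = 1$ follows because, for each valuation $v$, the condition $\home{v}(t) \ineq^{\ske{\st}} \home{v}(t') = 1$ is precisely $\home{v}(t) \ineq^{\st} \home{v}(t') = 1$; taking the meet over all $v$ then transfers full satisfaction from $\st$ to $\ske{\st}$. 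The two-valuedness of $\val[\ske{\st}]{t \ineq t'}$ is then automatic, since it is a meet of values drawn from $\{0,1\}$.
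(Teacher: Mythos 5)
Your proof is correct and, on the substantive direction, follows the same route as the paper: you show that closure under skeletons forces $\val[\K]{t \ineq t'} \in \{0,1\}$ by passing from a witness $\st$ with $\val[\st]{t \ineq t'} < 1$ to its skeleton, and then invoke Theorem~\ref{thm:var_ineq} with $\Sigma = \IneqX{X}$; the paper does exactly this, and dismisses the converse (which you spell out via Theorem~\ref{thm:ineq_var} together with your preliminary observation) as obvious.

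The one point worth flagging is your description of the skeleton. You take $\e{a} \ineq^{\ske{\st}} \e{b} = 1$ exactly when $\e{a} \ineq^{\st} \e{b} = 1$, i.e., the $1$-cut of $\ineq^{\st}$, whereas the paper's text defines $\ineq^{\ske{\st}}$ to be the \emph{identity} $\L$-relation, so that $\e{a} \ineq^{\ske{\st}} \e{b} = 1$ exactly when $\e{a} = \e{b}$. Under that literal reading the implication $\val[\st]{t \ineq t'} = 1 \Rightarrow \val[\ske{\st}]{t \ineq t'} = 1$ in your observation fails (full satisfaction of $t \ineq t'$ does not force $\home{v}(t) = \home{v}(t')$), and in fact the right-to-left half of the theorem itself would be false: for a nullary $c$ and unary $f$, take $M = \{\e{a},\e{b}\}$ with $c^{\st} = \e{a}$, $f^{\st}(\e{a}) = f^{\st}(\e{b}) = \e{b}$, $\e{a} \ineq^{\st} \e{b} = 1$ and $\e{b} \ineq^{\st} \e{a} = 0$; the crisp theory with $\Sigma(c \ineq f(c)) = 1$ has $\st$ as a model but not its identity-ordered reduct. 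Your reading is the one under which the statement is true, and it is the one the paper's closing Remark endorses (there $\ske{\st}$ is identified with $\thr{0}$), so your argument stands; it simply resolves an imprecision in the paper's definition of skeleton in the only way that makes the theorem correct. The left-to-right direction, in both your proof and the paper's, is insensitive to this choice, since $\home{v}(t) \ineq^{\st} \home{v}(t') < 1$ already implies $\home{v}(t) \ne \home{v}(t')$ by reflexivity.
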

\begin{proof}
  Obviously, if $\K = \Mod$ for crisp $\Sigma$, then $\K$
  is closed under formations of sekeletons. Conversely, let $\K$
  be a variety closed under skeletons.
  Observe that if $\val[\K]{t \ineq t'} < 1$,
  then there is $\st \in \K$ and $v\!: X \to M$ such that
  $\val{t \ineq t'} < 1$. Now, for the sekeleton $\ske{\st}$ of $\st$ and
  using the same valuation~$v$, we have $\val[\ske{\st},v]{t \ineq t'} = 0$.
  Therefore, $\val[\K]{t \ineq t'} = 0$ because $\ske{\st} \in \K$.
  As a consequence, $\IneqX{X}$ is crisp and we have $\K = \Mod$ for
  crisp $\Sigma = \IneqX{X}$, see Theorem~\ref{thm:var_ineq}.
\end{proof}

\begin{remark}
  Theorem~\ref{th:var_crisp} may further be generalized. For instance,
  for each $c < 1$, one may consider for $\st \in \K$ an algebra $\thr{c}$
  with the same universe and functions as $\st$ and with
  $\ineq^{\thr{c}}$ defined by
  \begin{align*}
    \e{a} \ineq^{\thr{c}} \e{b} &=
    \left\{
      \begin{array}{@{\,}l@{\quad}l@{}}
        1, &\text{if } \e{a} \ineq^{\st} \e{b} = 1, \\
        c, &\text{otherwise},
      \end{array}
    \right.
  \end{align*}
  for all $\e{a},\e{b} \in M$. Obviously, $\thr{c}$ is an algebra
  with $\L$-order. Using analogous arguments as in the proof of 
  Theorem~\ref{th:var_crisp}, we conclude that the following statements
  are equivalent:
  \begin{enumerate}
  \item[\itm{1}]
    $\K$ is a variety such that $\thr{c} \in \K$ whenever $\st \in \K$;
  \item[\itm{2}]
    $\K = \Mod$ where
    $\{\Sigma(t \approx t');\, t,t' \in T(X)\} \subseteq [0,c] \cup \{1\}$.
  \end{enumerate}
  Clearly, for $c = 0$ we obtain exactly the equivalence in
  Theorem~\ref{th:var_crisp}.
  Further investigation of closure properties of varieties of algebras
  with $\L$-order and the corresponding constraints on $\Sigma$ should
  prove interesting.
\end{remark}

\subsubsection*{Acknowledgment}
Supported by grant no. \verb|P202/14-11585S| of the Czech Science Foundation.


\footnotesize
\bibliographystyle{amsplain}
\bibliography{vtawfo}

\end{document}